\documentclass[12pt,titlepage]{article}
\usepackage{latexsym,amsmath,amsthm}
\usepackage{amssymb}
\usepackage[margin=1in]{geometry}
\usepackage{graphicx}

\newtheorem{thm}{Theorem}[section]
\newtheorem{defn}[thm]{Definition}
\newtheorem{lemma}[thm]{Lemma}
\newtheorem{prop}[thm]{Proposition}

\newtheorem{conj}[thm]{Conjecture}

\newtheorem{question}[thm]{Question}

\newtheorem{property}[thm]{Property}

\newtheorem{exercise}[thm]{Exercise}

\title{Three tutorial lectures on entropy and counting\footnote{These notes were prepared to accompany a series of tutorial lectures given by the author at the 1st Lake Michigan Workshop on Combinatorics and Graph Theory, held at Western Michigan University on March 15--16 2014.}}

\author{David Galvin\thanks{dgalvin1@nd.edu; Department of Mathematics,
University of Notre Dame, Notre Dame IN 46556. Supported in part by National Security Agency grant H98230-13-1-0248.}}

\date{1st Lake Michigan Workshop on Combinatorics\\ and Graph Theory, March 15--16 2014}

\begin{document}

\maketitle

\begin{abstract}
We explain the notion of the {\em entropy} of a discrete random variable, and derive some of its basic properties. We then show through examples how entropy can be useful as a combinatorial enumeration tool. We end with a few open questions.

%
%
\end{abstract}

\tableofcontents

\section{Introduction}

One of the concerns of information theory is the efficient encoding of complicated sets by simpler ones (for example, encoding all possible messages that might be sent along a channel, by as small as possible a collection of 0-1 vectors). Since encoding requires injecting the complicated set into the simpler one, and efficiency demands that the injection be close to a bijection, it is hardly surprising that ideas from information theory can be useful in combinatorial enumeration problems.

These notes, which were prepared to accompany a series of tutorial lectures given at the 1st Lake Michigan Workshop on Combinatorics and Graph Theory, aim to introduce the information-theoretic notion of the {\em entropy} of a discrete random variable, derive its basic properties, and show how it can be used as a tool for estimating the size of combinatorially defined sets.

The entropy of a random variable is essentially a measure of its degree of randomness, and was introduced by Claude Shannon in 1948. The key property of Shannon's entropy that makes it useful as an enumeration tool is that over all random variables that take on at most $n$ values with positive probability, the ones with the largest entropy are those which are uniform on their ranges, and these random variables have entropy exactly $\log_2 n$. So if ${\mathcal C}$ is a set, and $X$ is a uniformly randomly selected element of ${\mathcal C}$, then anything that can be said about the entropy of $X$ immediately translates into something about $|{\mathcal C}|$. Exploiting this idea to estimate sizes of sets goes back at least to a 1963 paper of Erd\H{o}s and R\'enyi \cite{ErdosRenyi}, and there has been an explosion of results in the last decade or so (see Section \ref{sec-other papers}).

In some cases, entropy provides a short route to an already known result. This is the case with three of our quick examples from Section \ref{sec-quick applications}, and also with two of our major examples, Radhakrishnan's proof of Br\'egman's theorem on the maximum permanent of a 0-1 matrix with fixed row sums (Section \ref{sec-bregman}), and Friedgut and Kahn's determination of the maximum number of copies of a fixed graph that can appear in another graph on a fixed number of edges (Section \ref{sec-embedding}). But entropy has also been successfully used to obtain new results. This is the case with one of our quick examples from Section \ref{sec-quick applications}, and also with the last of our major examples, Galvin and Tetali's tight upper bound of the number of homomorphisms to a fixed graph admitted by a regular bipartite graph (Section \ref{sec-counting colorings}, generalizing an earlier special case, independent sets, proved using entropy by Kahn). Only recently has a non-entropy approach for this latter example been found.

In Section \ref{sec-basics of entropy} we define, motivate and derive the basic properties of entropy. Section \ref{sec-quick applications} presents four quick applications, while three more substantial applications are given in Sections \ref{sec-embedding}, \ref{sec-bregman} and \ref{sec-counting colorings}. Section \ref{sec-open problems} presents some open questions that are of particular interest to the author, and Section \ref{sec-other papers} gives a brief bibliographic survey of some of the uses of entropy in combinatorics.

The author learned of many of the examples that will be presented from the lovely 2003 survey paper by Radhakrishnan \cite{Radhakrishnan2}.

\section{The basic notions of entropy} \label{sec-basics of entropy}

\subsection{Definition of entropy} \label{subsec-definition of entropy}

Throughout, $X$, $Y$, $Z$ etc. will be discrete random variables (actually, random variables taking only finitely many values), always considered relative to the same probability space. We write $p(x)$ for $\Pr(\{X=x\})$. For any event $E$ we write $p(x|E)$ for $\Pr(\{X=x\}|E)$, and we write $p(x|y)$ for $\Pr(\{X=x\}|\{Y=y\})$.

\begin{defn} \label{defn-entropy}
The {\em entropy} $H(X)$ of $X$ is given by
$$
H(X) = \sum_x - p(x)\log p(x),
$$
where $x$ varies over the range of $X$.
\end{defn}
Here and everywhere we adopt the convention that $0\log 0 = 0$, and that the logarithm is always base 2.

Entropy was introduced by Claude Shannon in 1948 \cite{Shannon}, as a measure of the expected amount of information contained in a realization of $X$. It is somewhat analogous to the notion of entropy from thermodynamics and statistical physics, but there is no perfect correspondence between the two notions. (Legend has it that the name ``entropy'' was applied to Shannon's notion by von Neumann, who was inspired by the similarity to physical entropy. The following recollection of Shannon was reported in \cite{TribusMcIrvine}: ``My greatest concern was what to call it. I thought of calling it `information', but the word was overly used, so I decided to call it `uncertainty'. When I discussed it with John von Neumann, he had a better idea. Von Neumann told me, `You should call it entropy, for two reasons. In the first place your uncertainty function has been used in statistical mechanics under that name, so it already has a name. In the second place, and more important, nobody knows what entropy really is, so in a debate you will always have the advantage'.'')

In the present context, it is most helpful to (informally) think of entropy as a measure of the expected amount of surprise evinced by a realization of $X$, or as a measure of the degree of randomness of $X$. A motivation for this way of thinking is the following: let $S$ be a function that measures the surprise evinced by observing an event occurring in a probability space. It's reasonable to assume that the surprise associated with an event depends only on the probability of the event, so that $S:[0,1] \rightarrow {\mathbb R}^+$ (with $S(p)$ being the surprise associated with seeing an event that occurs with probability $p$).

There are a number of conditions that we might reasonably impose on $S$:
\begin{enumerate}
\item $S(1)=0$ (there is no surprise on seeing a certain event);
\item If $p < q$, then $S(p)>S(q)$ (rarer events are more surprising);
\item $S$ varies continuously with $p$;
\item $S(pq)=S(p)+S(q)$ (to motivate this imposition, consider two independent events $E$ and $F$ with $\Pr(E)=p$ and $\Pr(F)=q$. The surprise on seeing $E \cap F$ (which is $S(pq)$) might reasonable be taken to be the surprise on seeing $E$ (which is $S(p)$) plus the remaining surprise on seeing $F$, given that $E$ has been seen (which should be $S(q)$, since $E$ and $F$ are independent);  and
\item $S(1/2)=1$ (a normalizing condition).
\end{enumerate}

\begin{prop} \label{prop-uniqueness of surprise}
The unique function $S$ that satisfies conditions 1 through 5 above is $S(p) = -\log p$
\end{prop}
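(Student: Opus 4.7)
The plan is to reduce condition 4 to the classical Cauchy functional equation $f(x+y)=f(x)+f(y)$ via the substitution $x = -\log_2 p$, and then use continuity (condition 3) to conclude linearity. Specifically, I would define $f:[0,\infty)\to\mathbb{R}^+$ by $f(x)=S(2^{-x})$. Condition 4 applied to $p=2^{-x}$ and $q=2^{-y}$ gives $f(x+y)=f(x)+f(y)$, condition 5 gives $f(1)=1$, condition 3 gives continuity of $f$, and condition 2 gives that $f$ is strictly increasing.

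From here I would extract linearity by hand rather than citing the general Cauchy theorem. First, iterating $f(x+y)=f(x)+f(y)$ gives $f(nx)=nf(x)$ for every positive integer $n$ and every $x\geq 0$. Applying this with $x$ replaced by $x/n$ gives $f(x/n)=f(x)/n$, and combining yields $f(rx)=rf(x)$ for every positive rational $r$. Specializing to $x=1$ and using $f(1)=1$ gives $f(r)=r$ for all positive rationals $r$. Continuity of $f$ then extends this to $f(x)=x$ for all $x\geq 0$. Unpacking the substitution, for $p\in(0,1]$ we set $x=-\log_2 p\geq 0$ and obtain $S(p)=f(-\log_2 p)=-\log_2 p$, as desired. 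Finally, $S(0)$ is handled by taking $p\to 0^+$ using condition 2 (strict monotonicity) and the convention $-\log 0 = +\infty$, or simply by ignoring $p=0$ since it corresponds to an impossible event.

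Verifying that $S(p)=-\log_2 p$ actually satisfies all five conditions is then a routine check. I should also note at the outset that condition 4 with $p=q=1$ forces $S(1)=2S(1)$, hence $S(1)=0$ (so condition 1 is actually a consequence of condition 4, and I would mention this in passing).

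The main obstacle is really only a minor technicality: ensuring that the passage from rationals to reals is justified without appealing to a pathological (non-measurable) additive function. Here condition 3 does exactly the work needed, so no measure-theoretic subtleties arise. The calculation is otherwise mechanical, and the whole argument should fit comfortably in one short paragraph of formal writing.
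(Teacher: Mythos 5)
Your proof is correct, and in fact the paper offers no proof to compare against: Proposition \ref{prop-uniqueness of surprise} is left to the reader as Exercise \ref{exercise-uniqueness of surprise}. The substitution $f(x)=S(2^{-x})$ reducing condition 4 to the Cauchy equation, the rational-scaling argument pinned down by $f(1)=S(1/2)=1$, and the extension to all reals by continuity is exactly the ``relatively straightforward'' argument the author intends, and your handling of the $p=0$ endpoint and the observation that condition 1 is redundant given condition 4 are both sound.
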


The author first saw this proposition in Ross's undergraduate textbook \cite{Ross}, but it is undoubtedly older than this.

\begin{exercise} \label{exercise-uniqueness of surprise}
Prove Proposition \ref{prop-uniqueness of surprise} (this is relatively straightforward).
\end{exercise}

Proposition \ref{prop-uniqueness of surprise} says that $H(X)$ does indeed measure the expected amount of surprise evinced by a realization of $X$.

\subsection{Binary entropy} \label{subsec-binary entropy}

We will also use ``$H(\cdot)$'' as notation for a certain function of a single real variable, closely related to entropy.

\begin{defn} \label{defn-binary entropy}
The {\em binary entropy function} is the function $H:[0,1] \rightarrow {\mathbb R}$ given by
$$
H(p) = -p\log p -(1-p)\log (1-p).
$$
Equivalently, $H(p)$ is the entropy of a two-valued (Bernoulli) random variable that takes its two values with probability $p$ and $1-p$.
\end{defn}

$$
\includegraphics[width=3in]{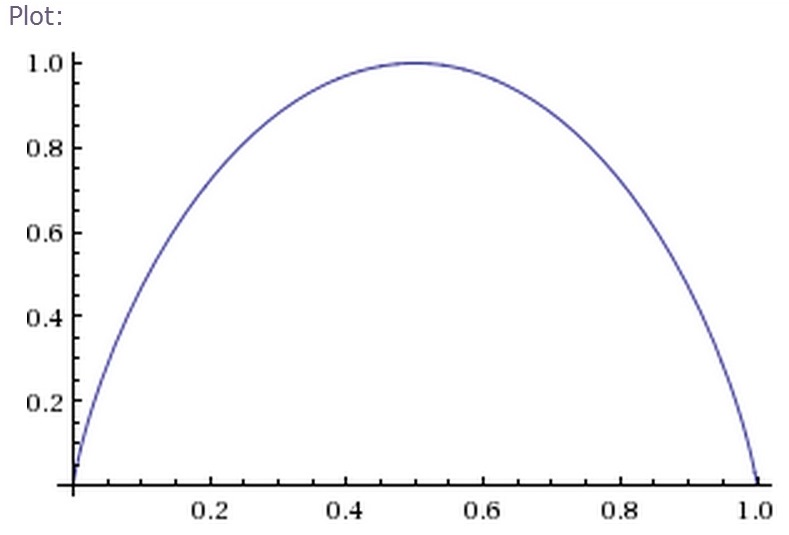}\\
$$

The graph of $H(p)$ is shown above ($x$-axis is $p$). Notice that it has a unique maximum at $p=1/2$ (where it takes the value $1$), rises monotonically from $0$ to $1$ as $p$ goes from $0$ to $1/2$, and falls monotonically back to $0$ as $p$ goes from $1/2$ to $1$. This reflects that idea that there is no randomness in the flip of a two-headed or two-tailed coin ($p=0,1$), and that among biased coins that come up heads with probability $p$, $0 < p < 1$, the fair ($p=1/2$) coin is in some sense the most random.

\subsection{The connection between entropy and counting} \label{subsec-connection to counting}

To see the basic connection between entropy and counting, we need Jensen's inequality.
\begin{thm} \label{thm-jensen}
Let $f:[a,b] \rightarrow {\mathbb R}$ be a continuous, concave function, and let $p_1, \ldots, p_n$ be non-negative reals that sum to $1$. For any $x_1, \ldots, x_n \in [a,b]$,
$$
\sum_{i=1}^n p_i f(x_i) \leq f\left(\sum_{i=1}^n p_ix_i\right).
$$
\end{thm}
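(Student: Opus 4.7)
The plan is to proceed by induction on $n$, the number of points, taking the two-point version of the inequality as the base case.

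First, I would observe that for $n=2$ the statement $p_1 f(x_1) + p_2 f(x_2) \leq f(p_1 x_1 + p_2 x_2)$ (with $p_1,p_2 \geq 0$ and $p_1+p_2=1$) is exactly the defining inequality of concavity, so it holds by hypothesis on $f$. (Strictly, continuity is not invoked here or in the induction; it is presumably included in the statement just to match the usual convention, or in case one is taking concavity to mean only midpoint concavity, which continuity then upgrades to the full version.)

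For the inductive step, suppose the inequality is known for $n-1$ points. Given $p_1,\ldots,p_n$ and $x_1,\ldots,x_n$, I may assume $p_n < 1$, since $p_n = 1$ forces every other weight to vanish and reduces the inequality to a tautology; I may similarly discard any other index with $p_i = 0$. Setting $q_i = p_i/(1-p_n)$ for $i=1,\ldots,n-1$ then gives non-negative weights summing to $1$, and
$$\sum_{i=1}^n p_i f(x_i) \;=\; (1-p_n)\sum_{i=1}^{n-1} q_i f(x_i) \;+\; p_n f(x_n).$$
Applying the inductive hypothesis bounds the inner sum by $f\bigl(\sum_{i=1}^{n-1} q_i x_i\bigr)$, and then applying the two-point base case to the weights $1-p_n,\,p_n$ at the two points $\sum_{i=1}^{n-1} q_i x_i$ and $x_n$ delivers the desired bound $f\bigl(\sum_{i=1}^n p_i x_i\bigr)$.

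I do not anticipate a substantial obstacle: the argument is purely formal once the two-point case is in hand, and the only care required is in handling the degenerate weights so that the reweighting $p_i/(1-p_n)$ is well-defined. A slicker alternative would be to fix $\bar{x} = \sum_i p_i x_i$, invoke the existence of a supporting line $L$ for $f$ at $\bar{x}$ (affine, with $L(\bar{x})=f(\bar{x})$ and $L \geq f$ on $[a,b]$, which is where concavity is used), and conclude $\sum_i p_i f(x_i) \leq \sum_i p_i L(x_i) = L(\bar{x}) = f(\bar{x})$; but this variant uses a small piece of convex-analysis machinery, so for a self-contained presentation I would favor the induction.
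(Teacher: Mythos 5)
Your induction argument is correct and complete: the base case is the definition of concavity, the reweighting $q_i = p_i/(1-p_n)$ is handled properly (including the degenerate case $p_n=1$), and the point $\sum_{i=1}^{n-1} q_i x_i$ stays in $[a,b]$ as a convex combination, so the two-point step applies. Note that the paper states Jensen's inequality as background and gives no proof of it at all, so there is nothing to compare against; your proof (and your remark that continuity is not actually needed once concavity is taken in the chord sense) stands on its own as the standard argument.
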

Noting that the logarithm function is concave, we have the following corollary, the first basic property of entropy.
\begin{property} (Maximality of the uniform) \label{property-uniform}
For random variable $X$,
$$
H(X) \leq \log |{\rm range}(X)|
$$
where ${\rm range}(X)$ is the set of values that $X$ takes on with positive probability. If $X$ is uniform on its range (taking on each value with probability $1/|{\rm range}(X)|$) then $H(X) = \log |{\rm range}(X)|$.
\end{property}

This property of entropy makes clear why it can be used as an enumeration tool. Suppose ${\mathcal C}$ is some set whose size we want to estimate. If $X$ is a random variable that selects an element from ${\mathcal C}$ uniformly at random, then $|{\mathcal C}| = 2^{H(X)}$, and so anything that can be said about $H(X)$ translates directly into something about $|{\mathcal C}|$.

\subsection{Subadditivity} \label{subsec-subadditivity}

In order to say anything sensible about $H(X)$, and so make entropy a {\em useful}  enumeration tool, we need to derive some further properties. We begin with subadditivity. A vector $(X_1, \ldots, X_n)$ of random variables is itself a random variable, and so we may speak sensibly of $H(X_1, \ldots, X_n)$. Subadditivity relates $H(X_1, \ldots, X_n)$ to $H(X_1)$, $H(X_2)$, etc..
\begin{property} (Subadditivity) \label{property-subadditivity}
For random vector $(X_1, \ldots, X_n)$,
$$
H(X_1, \ldots, X_n) \leq \sum_{i=1}^n H(X_i).
$$
\end{property}
Given the interpretation of entropy as expected surprise, Subadditivity is reasonable: considering the components separately cannot cause less surprise to be evinced than considering them together, since any dependence among the components will only reduce surprise.

We won't prove Subadditivity now, but we will derive it later (Section \ref{subsec-proof of subadditivity}) from a combination of other properties. Subadditivity is all that is needed for our first two applications of entropy, to estimating the sum of binomial coefficients (Section \ref{subsec-binomial}), and (historically the first application of entropy) to obtaining a lower bound for the coin-weighing problem (Section \ref{subsec-Pippinger-first}).

\subsection{Shearer's lemma} \label{subsec-shearer}

Subadditivity was significantly generalized in Chung et al. \cite{ChungFranklGrahamShearer} to what is known as Shearer's lemma. Here and throughout we use $[n]$ for $\{1, \ldots, n\}$.
\begin{lemma} (Shearer's lemma) \label{lemma-Shearer}
Let ${\mathcal F}$ be a family of subsets of $[n]$ (possibly with repeats) with each $i \in [n]$ included in at least $t$ members of ${\mathcal F}$. For random vector $(X_1, \ldots, X_n)$,
$$
H(X_1, \ldots, X_n) \leq \frac{1}{t} \sum_{F \in {\mathcal F}} H(X_F),
$$
where $X_F$ is the vector $(X_i: i \in F)$.
\end{lemma}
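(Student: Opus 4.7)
The plan is to prove Shearer's Lemma via the standard two-tool approach: the \emph{chain rule} for entropy, together with the fact that \emph{conditioning cannot increase entropy}. Concretely, I will assume (and record as auxiliary facts, to be proved alongside Subadditivity in the deferred section) that conditional entropy $H(Y \mid Z) = \sum_z p(z)\, H(Y \mid Z=z)$ satisfies (a) the chain rule $H(X_1, \ldots, X_n) = \sum_{i=1}^n H(X_i \mid X_1, \ldots, X_{i-1})$, and (b) the ``more conditioning helps'' inequality $H(Y \mid Z, W) \leq H(Y \mid Z)$. (In fact Subadditivity is the special case $H(Y,Z) \leq H(Y) + H(Z)$, i.e., $H(Z \mid Y) \leq H(Z)$, of (b), so Shearer is being derived from a mild strengthening of Subadditivity.)

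With those tools in hand, the execution is short. Fix $F \in \mathcal{F}$ and write its elements in increasing order as $i_1 < i_2 < \cdots < i_k$. The chain rule applied to $X_F$ gives
$$
H(X_F) = \sum_{j=1}^{k} H\bigl(X_{i_j} \;\big|\; X_{i_1}, \ldots, X_{i_{j-1}}\bigr).
$$
Since $\{i_1, \ldots, i_{j-1}\} \subseteq \{1, 2, \ldots, i_j - 1\}$, property (b) lets me enlarge the conditioning set, yielding
$$
H(X_F) \;\geq\; \sum_{j=1}^{k} H\bigl(X_{i_j} \;\big|\; X_1, \ldots, X_{i_j-1}\bigr) \;=\; \sum_{i \in F} H\bigl(X_i \;\big|\; X_1, \ldots, X_{i-1}\bigr).
$$
Now sum over $F \in \mathcal{F}$ and swap the order of summation. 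Each index $i \in [n]$ contributes the term $H(X_i \mid X_1, \ldots, X_{i-1})$ once for every $F \in \mathcal{F}$ containing $i$, which by hypothesis is at least $t$ times. Hence
$$
\sum_{F \in \mathcal{F}} H(X_F) \;\geq\; \sum_{i=1}^{n} \bigl|\{F \in \mathcal{F} : i \in F\}\bigr| \cdot H(X_i \mid X_1, \ldots, X_{i-1}) \;\geq\; t \sum_{i=1}^{n} H(X_i \mid X_1, \ldots, X_{i-1}),
$$
and the right-hand side equals $t \cdot H(X_1, \ldots, X_n)$ by the chain rule. Dividing by $t$ gives the lemma.

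The main obstacle is not in the combinatorics of the argument but in setting up the correct inequality at the per-term level: one must be sure that expanding $H(X_F)$ in the natural order of $F$ gives conditioning sets that are subsets of the ``full history'' $\{1,\ldots,i-1\}$, so that property (b) applies in the correct direction. Choosing any other ordering of $F$ would still work, but the increasing order makes the containment $\{i_1,\ldots,i_{j-1}\} \subseteq \{1,\ldots,i_j-1\}$ transparent. A quick sanity check: taking $\mathcal{F} = \{\{1\},\{2\},\ldots,\{n\}\}$ with $t=1$ recovers Subadditivity exactly, as it should.
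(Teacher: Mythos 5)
Your proof is correct and is essentially the same argument the paper gives (the Radhakrishnan--Llewellyn proof): expand $H(X_F)$ by the chain rule in increasing order, enlarge each conditioning set to the full history $X_1,\ldots,X_{i_j-1}$ via dropping conditioning, sum over $\mathcal{F}$ using the covering hypothesis, and reassemble $H(X_1,\ldots,X_n)$ with the chain rule. No gaps.
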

To recover Subadditivity from Shearer's lemma, take ${\mathcal F}$ to be the family of singleton subsets of $[n]$. The special case where ${\mathcal F}=\{[n]\setminus i:i \in [n]\}$ is Han's inequality \cite{Han}.

We'll prove Shearer's lemma in Section \ref{subsec-proof of subadditivity}. A nice application to bounding the volume of a body in terms of the volumes of its co-dimension 1 projections is given in Section \ref{subsec-loomiswhitney}, and a more substantial application, to estimating the maximum number of copies of one graph that can appear in another, is given in Section \ref{sec-embedding}.

\subsection{Hiding the entropy in Shearer's lemma} \label{subsec-shearer-counting}

Lemma \ref{lemma-Shearer} does not appear in \cite{ChungFranklGrahamShearer} as we have stated it; the entropy version can be read out of the proof from \cite{ChungFranklGrahamShearer} of the following purely combinatorial version of the lemma. For a set of subsets ${\mathcal A}$ of some ground-set $U$, and a subset $F$ of $U$, the {\em trace} of ${\mathcal A}$ on $F$ is
$$
{\rm trace}_F({\mathcal A}) = \{A \cap F: A \in {\mathcal A}\};
$$
that is, ${\rm trace}_F({\mathcal A})$ is the set of possible intersections of elements of ${\mathcal A}$ with $F$.
\begin{lemma} (Combinatorial Shearer's lemma) \label{lemma-comb-Shearer}
Let ${\mathcal F}$ be a family of subsets of $[n]$ (possibly with repeats) with each $i \in [n]$ included in at least $t$ members of ${\mathcal F}$. Let ${\mathcal A}$ be another set of subsets of $[n]$. Then
$$
|{\mathcal A}| \leq \prod_{F \in {\mathcal F}} |{\rm trace}_F({\mathcal A})|^\frac{1}{t}.
$$
\end{lemma}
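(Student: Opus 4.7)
The plan is to deduce the combinatorial statement from the entropy version of Shearer's lemma (Lemma \ref{lemma-Shearer}) by letting entropy do the bookkeeping on $|\mathcal{A}|$ and on the trace sizes, via Property \ref{property-uniform} applied at both ends.

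First I would encode elements of $\mathcal{A}$ as $0$-$1$ vectors. Let $X$ be chosen uniformly from $\mathcal{A}$, and identify $X$ with the random vector $(X_1,\ldots,X_n) \in \{0,1\}^n$ where $X_i$ is the indicator of $i \in X$. Since $X$ is uniform on $\mathcal{A}$, Property \ref{property-uniform} gives
$$
H(X_1,\ldots,X_n) = \log |\mathcal{A}|.
$$
For each $F \in \mathcal{F}$, the subvector $X_F = (X_i : i \in F)$ records precisely $X \cap F$ (under the identification above), so the set of values taken by $X_F$ with positive probability is exactly $\mathrm{trace}_F(\mathcal{A})$. Applying Property \ref{property-uniform} again (this time as an inequality, since $X_F$ need not be uniform on its range),
$$
H(X_F) \leq \log |\mathrm{trace}_F(\mathcal{A})|.
$$

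Now I would invoke Shearer's lemma on the family $\mathcal{F}$, which covers each $i \in [n]$ at least $t$ times:
$$
\log |\mathcal{A}| = H(X_1,\ldots,X_n) \leq \frac{1}{t}\sum_{F \in \mathcal{F}} H(X_F) \leq \frac{1}{t}\sum_{F \in \mathcal{F}} \log |\mathrm{trace}_F(\mathcal{A})|.
$$
Exponentiating both sides (base $2$) yields the claimed bound $|\mathcal{A}| \leq \prod_{F \in \mathcal{F}} |\mathrm{trace}_F(\mathcal{A})|^{1/t}$.

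There is no real obstacle here; the entire argument is a dictionary translation between counting and entropy, powered by Property \ref{property-uniform} in both directions, with Shearer's lemma doing the combinatorial work. The only spot that requires care is the encoding step: one must observe that after identifying subsets with $0$-$1$ vectors, the projection of the random vector to coordinates in $F$ takes values exactly in $\mathrm{trace}_F(\mathcal{A})$, so that the entropy bound for $H(X_F)$ matches the trace size appearing in the statement.
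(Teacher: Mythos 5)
Your proposal is correct and follows exactly the paper's own argument: encode a uniform element of ${\mathcal A}$ as an indicator vector, bound each $H(X_F)$ by $\log|{\rm trace}_F({\mathcal A})|$ via Maximality of the uniform, apply Shearer's lemma, and exponentiate. Nothing to add.
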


\begin{proof}
Let $X$ be an element of ${\mathcal A}$ chosen uniformly at random. View $X$ as the random vector $(X_1, \ldots, X_n)$, with $X_i$ the indicator function of the event $\{i \in X\}$. For each $F \in {\mathcal F}$ we have, using Maximality of the uniform (Property \ref{property-uniform}),
$$
H(X_F) \leq \log |{\rm trace}_F({\mathcal A})|,
$$
as so applying Shearer's lemma with covering family ${\mathcal F}$ we get
$$
H(X) \leq \frac{1}{t}\sum_{F \in {\mathcal F}} \log |{\rm trace}_F({\mathcal A})|.
$$
Using $H(X)=\log |{\mathcal A}|$ (again by Maximality of the uniform) and exponentiating, we get the claimed bound on $|{\mathcal A}|$.
\end{proof}

This proof nicely illustrates the general idea underlying every application of Shearer's lemma: a global problem (understanding $H(X_1, \ldots, X_n)$) is reduced to a collection of local ones (understanding $H(X_F)$ for each $F$), and these local problems can be approached using various properties of entropy.

Some applications of Shearer's lemma in its entropy form could equally well be presented in the purely combinatorial setting of Lemma \ref{lemma-comb-Shearer}; an example is given in Section \ref{subsec-triangles}, where we use Combinatorial Shearer's lemma to estimate the size of the largest family of graphs on $n$ vertices any pair of which have a triangle in common. More complex examples, however, such as those presented in Section \ref{sec-counting colorings}, cannot be framed combinatorially, as they rely on the inherently probabilistic notion of conditioning.

\subsection{Conditional entropy}

Much of the power of entropy comes from being able to understand the relationship between the entropies of dependent random variables. If $E$ is any event, we define the entropy of $X$ given $E$ to be
$$
H(X|E) = \sum_{x} -p(x|E)\log p(x|E),
$$
and for a pair of random variables $X, Y$ we define the entropy of $X$ given $Y$ to be
$$
H(X|Y) = E_Y(H(X|\{Y=y\})) = \sum_y p(y) \sum_{x} ~p(x|y)\log p(x|y).
$$
The basic identity related to conditional entropy is the chain rule, that pins down how the entropy of a random vector can be understood by revealing the components of the vector one-by-one.
\begin{property} (Chain rule) \label{property-chain rule}
For random variables $X$ and $Y$,
$$
H(X,Y) = H(X) + H(Y|X).
$$
More generally,
$$
H(X_1, \ldots, X_n) = \sum_{i=1}^n H(X_i | X_1, \ldots, X_{i-1}).
$$
\end{property}

\begin{proof}
We just prove the first statement, with the second following by induction. For the first,
\begin{eqnarray*}
H(X,Y) - H(X) & = & \sum_{x,y} - p(x,y)\log p(x,y) - \sum_x - p(x)\log p(x) \\
& = & \sum_x p(x) \sum_y -p(y|x)\log p(x)p(y|x)  + \sum_x p(x)\log p(x) \\
& = & \sum_x p(x) \sum_y -p(y|x)\log p(x)p(y|x) + \sum_x p(x) \sum_y p(y|x)\log p(x) \\
& = & \sum_x p(x) \left(\sum_y -p(y|x)\log p(x)p(y|x) + p(y|x)\log p(x) \right) \\
& = & \sum_x p(x) \left(\sum_y -p(y|x)\log p(y|x) \right) \\
& = & H(X|Y).
\end{eqnarray*}
The key point is in the third equality: for each fixed $x$, $\sum_y p(y|x) = 1$.
\end{proof}

Another basic property related to conditional entropy is that increasing conditioning cannot increase entropy. This makes intuitive sense --- the surprise evinced on observing $X$ should not increase if we learn something about it through an observation of $Y$.
\begin{property} (Dropping conditioning) \label{property-adding conditioning}
For random variables $X$ and $Y$,
$$
H(X|Y) \leq H(X).
$$
Also, for random variable $Z$
$$
H(X|Y,Z) \leq H(X|Y).
$$
\end{property}

\begin{proof}
We just prove the first statement, with the proof of the second being almost identical. For the first, we again use the fact that for each fixed $x$, $\sum_y p(y|x) = 1$, which will allow us to apply Jensen's inequality in the inequality below. We also use $p(y)p(x|y)=p(x)p(y|x)$ repeatedly. We have
\begin{eqnarray*}
H(X|Y) & = & \sum_{y} p(y) \sum_{x} - p(x|y)\log p(x|y) \\
& = & \sum_x p(x) \sum_y -p(y|x)\log p(x|y) \\
& \leq & \sum_x p(x) \log \left(\sum_y \frac{p(y|x)}{p(x|y)}\right) \\
& = & \sum_x p(x) \log \left(\sum_y \frac{p(y)}{p(x)}\right) \\
& = & \sum_x - p(x) \log p(x) \\
& = & H(X).
\end{eqnarray*}
\end{proof}

\subsection{Proofs of Subadditivity (Property \ref{property-subadditivity}) and Shearer's lemma (Lemma \ref{lemma-Shearer})} \label{subsec-proof of subadditivity}

The subadditivity of entropy follows immediately from a combination of the Chain rule (Property \ref{property-chain rule}) and Dropping conditioning (Property \ref{property-adding conditioning}).

The original proof of Shearer's lemma from \cite{ChungFranklGrahamShearer} involved an intricate and clever induction. Radhakrishnan and Llewellyn (reported in \cite{Radhakrishnan2}) gave the following lovely proof using the Chain rule and Dropping conditioning.

Write $F \in {\mathcal F}$ as $F =\{i_1, \ldots, i_k\}$ with $i_1 < i_2 < \ldots < i_k$. We have
\begin{eqnarray*}
H(X_F) & = & H(X_{i_1}, \ldots, X_{i_k}) \\
& = & \sum_{j=1}^k H(X_{i_j} | (X_{i_\ell}: \ell < j)) \\
& \geq & \sum_{j=1}^k H(X_{i_j} | X_1, \ldots, X_{i_j-1}).
\end{eqnarray*}
The inequality here is an application of Dropping conditioning.
If we sum this last expression over all $F \in {\mathcal F}$, then for each $i \in [n]$ the term $H(X_i|X_1, \ldots, X_{i-1})$ appears at least $t$ times and so
\begin{eqnarray*}
\sum_{F \in {\mathcal F}} H(X_F) & \geq & t \sum_{i=1}^n H(X_i|X_1, \ldots, X_{i-1}) \\
& = & tH(X),
\end{eqnarray*}
the equality using the Chain rule. Dividing through by $t$ we obtain Shearer's lemma.

\subsection{Conditional versions of the basic properties}

Conditional versions of each of Maximality of the uniform, the Chain rule, Subadditivity, and Shearer's lemma are easily proven, and we merely state the results here.

\begin{property} (Conditional maximality of the uniform) \label{property-conditonal-uniform}
For random variable $X$ and event $E$,
$$
H(X|E) \leq \log |{\rm range}(X|E)|
$$
where ${\rm range}(X|E)$ is the set of values that $X$ takes on with positive probability, given that $E$ has occurred.
\end{property}

\begin{property} (Conditional chain rule) \label{property-conditional-chain rule}
For random variables $X$, $Y$ and $Z$,
$$
H(X,Y|Z) = H(X|Z) + H(Y|X,Z).
$$
More generally,
$$
H(X_1, \ldots, X_n|Z) = \sum_{i=1}^n H(X_i | X_1, \ldots, X_{i-1}, Z).
$$
\end{property}

\begin{property} (Conditional subadditivity) \label{property-conditional-subadditivity}
For random vector $(X_1, \ldots, X_n)$, and random variable $Z$,
$$
H(X_1, \ldots, X_n|Z) \leq \sum_{i=1}^n H(X_i|Z).
$$
\end{property}

\begin{lemma} (First conditional Shearer's lemma) \label{lemma-conditional Shearer 1}
Let ${\mathcal F}$ be a family of subsets of $[n]$ (possibly with repeats) with each $i \in [n]$ included in at least $t$ members of ${\mathcal F}$. For random vector $(X_1, \ldots, X_n)$ and random variable $Z$,
$$
H(X_1, \ldots, X_n|Z) \leq \frac{1}{t} \sum_{F \in {\mathcal F}} H(X_F|Z).
$$
\end{lemma}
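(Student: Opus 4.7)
The plan is to repeat the Radhakrishnan--Llewellyn proof of the unconditional Shearer's lemma given earlier in the paper, but with every invocation of the Chain rule replaced by the Conditional chain rule (Property \ref{property-conditional-chain rule}) and every invocation of Dropping conditioning (Property \ref{property-adding conditioning}) replaced by its conditional form. The auxiliary variable $Z$ will simply come along for the ride, sitting in the conditioning slot throughout.

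More concretely, I would fix $F = \{i_1, \ldots, i_k\} \in \mathcal{F}$ with $i_1 < i_2 < \cdots < i_k$, and begin by applying the Conditional chain rule to obtain
$$
H(X_F \mid Z) = \sum_{j=1}^{k} H\!\left(X_{i_j} \,\middle|\, X_{i_1}, \ldots, X_{i_{j-1}}, Z\right).
$$
Next I would invoke Dropping conditioning in its conditional form: since $\{i_1, \ldots, i_{j-1}\} \subseteq \{1, \ldots, i_j - 1\}$, conditioning on the larger set (together with $Z$) can only decrease entropy, so each term is at least $H(X_{i_j} \mid X_1, \ldots, X_{i_j - 1}, Z)$. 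This yields
$$
H(X_F \mid Z) \geq \sum_{j=1}^{k} H\!\left(X_{i_j} \,\middle|\, X_1, \ldots, X_{i_j - 1}, Z\right).
$$

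Now I would sum the last display over all $F \in \mathcal{F}$. By the covering hypothesis, for each $i \in [n]$ the term $H(X_i \mid X_1, \ldots, X_{i-1}, Z)$ appears on the right-hand side at least $t$ times, so
$$
\sum_{F \in \mathcal{F}} H(X_F \mid Z) \geq t \sum_{i=1}^{n} H(X_i \mid X_1, \ldots, X_{i-1}, Z) = t\,H(X_1, \ldots, X_n \mid Z),
$$
where the final equality is one more application of the Conditional chain rule. Dividing by $t$ gives the claimed inequality.

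I do not expect a genuine obstacle: every step is a direct conditional analogue of a step already carried out in Section~\ref{subsec-proof of subadditivity}. The only mild subtlety is verifying that Dropping conditioning behaves correctly when we enlarge the conditioning set in the presence of the extra variable $Z$; this is exactly the second statement of Property~\ref{property-adding conditioning} iterated (or, if one prefers, the conditional version of that property applied with $Z$ appended to all conditioning sets), and so requires no new idea.
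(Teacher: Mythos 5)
Your proof is correct and is exactly the argument the paper intends: the paper leaves Lemma \ref{lemma-conditional Shearer 1} as an easy exercise, and the intended route is precisely the Radhakrishnan--Llewellyn proof from Section \ref{subsec-proof of subadditivity} with $Z$ carried along in every conditioning, which is what you do. Each step (Conditional chain rule, iterated Dropping conditioning, counting multiplicities over ${\mathcal F}$) checks out.
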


A rather more powerful and useful conditional version of Shearer's lemma, that may be proved exactly as we proved Lemma \ref{lemma-Shearer}, was given by Kahn \cite{Kahn2}.
\begin{lemma} (Second conditional Shearer's lemma) \label{lemma-conditional Shearer 2}
Let ${\mathcal F}$ be a family of subsets of $[n]$ (possibly with repeats) with each $i \in [n]$ is included in at least $t$ members of ${\mathcal F}$. Let $\prec$ be a partial order on $[n]$, and for $F \in {\mathcal F}$ say that $i \prec F$ if $i \prec x$ for each $x \in F$.
For random vector $(X_1, \ldots, X_n)$,
$$
H(X_1, \ldots, X_n) \leq \frac{1}{t} \sum_{F \in {\mathcal F}} H(X_F|\{X_i:i \prec F\}).
$$
\end{lemma}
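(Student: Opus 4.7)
The plan is to adapt the Radhakrishnan--Llewellyn proof of Shearer's lemma from Section \ref{subsec-proof of subadditivity}, with the partial order $\prec$ playing a role in controlling which conditioning variables appear. The backbone of the argument is still: use the Conditional chain rule (Property \ref{property-conditional-chain rule}) to expand each $H(X_F \mid \cdot)$, use Dropping conditioning (Property \ref{property-adding conditioning}) to harmonize the conditioning sets to an initial segment of the indices, then sum and apply the covering hypothesis together with the (unconditional) Chain rule.

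More concretely, first I would fix a linear extension $<_\ast$ of $\prec$ and relabel so that $<_\ast$ coincides with the usual order $1<2<\cdots<n$; this is the one preparatory move beyond the original proof. The purpose of the linear extension is the following compatibility: if $i\prec F$ then $i\prec x$ for every $x\in F$, so $i<_\ast x$ for every $x\in F$, so in particular $i<i_j$ whenever $F$ is listed as $i_1<i_2<\cdots<i_k$. Now for each $F\in\mathcal{F}$, the Conditional chain rule gives
$$H(X_F\mid\{X_i:i\prec F\}) \;=\; \sum_{j=1}^k H\bigl(X_{i_j}\mid X_{i_1},\ldots,X_{i_{j-1}},\{X_i:i\prec F\}\bigr).$$
By the compatibility, the conditioning set on the right is contained in $\{X_1,\ldots,X_{i_j-1}\}$, so Dropping conditioning yields
$$H\bigl(X_{i_j}\mid X_{i_1},\ldots,X_{i_{j-1}},\{X_i:i\prec F\}\bigr) \;\geq\; H(X_{i_j}\mid X_1,\ldots,X_{i_j-1}).$$

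Summing these inequalities over $F\in\mathcal{F}$, each $i\in[n]$ lies in at least $t$ members of $\mathcal{F}$, so the term $H(X_i\mid X_1,\ldots,X_{i-1})$ appears at least $t$ times on the right-hand side, giving
$$\sum_{F\in\mathcal{F}} H(X_F\mid\{X_i:i\prec F\}) \;\geq\; t\sum_{i=1}^n H(X_i\mid X_1,\ldots,X_{i-1}) \;=\; t\,H(X_1,\ldots,X_n),$$
the last equality being the Chain rule (Property \ref{property-chain rule}); dividing by $t$ yields the lemma. I do not anticipate a genuine obstacle: the only subtle point is setting up the indexing so that $\{X_i:i\prec F\}$ sits inside $\{X_1,\ldots,X_{i_j-1}\}$ for every $j$, and that is exactly what passing to a linear extension of $\prec$ at the outset accomplishes.
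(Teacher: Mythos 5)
Your proof is correct and follows exactly the route the paper intends: the text states that Lemma \ref{lemma-conditional Shearer 2} ``may be proved exactly as we proved Lemma \ref{lemma-Shearer},'' and your argument is precisely that adaptation, with the one genuinely necessary addition being the choice of a linear extension of $\prec$ so that the conditioning set $\{X_i : i \prec F\}$ lands inside the initial segment $\{X_1,\ldots,X_{i_j-1}\}$ before Dropping conditioning is applied. All the steps (Conditional chain rule, Dropping conditioning, the covering count, and the final Chain rule) are used in the right direction, so there is nothing to correct.
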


\begin{exercise} \label{exercise -- prove properties}
Give proofs of all the properties and lemmas from this section.
\end{exercise}

\section{Four quick applications} \label{sec-quick applications}

Here we give four fairly quick applications of the entropy method in combinatorial enumeration.

\subsection{Sums of binomial coefficients} \label{subsec-binomial}

There is clearly a connection between entropy and the binomial coefficients; for example, Stirling's approximation to $n!$ ($n! \sim (n/e)^n \sqrt{2\pi n}$ as $n \rightarrow \infty$) gives
\begin{equation}
\binom{n}{\alpha n}  \sim \frac{2^{H(\alpha)n}}{\sqrt{2\pi n\alpha(1-\alpha)}} \label{eq-n choose alpha n}
\end{equation}
for any fixed $0 < \alpha < 1$. Here is a nice bound on the sum of all the binomial coefficients up to $\alpha n$, that in light of (\ref{eq-n choose alpha n}) is relatively tight, and whose proof nicely illustrates the use of entropy.
\begin{thm} \label{thm-sums of binomial coefficients}
Fix $\alpha \leq 1/2$. For all $n$,
$$
\sum_{i \leq \alpha n} \binom{n}{i} \leq 2^{H(\alpha)n}.
$$
\end{thm}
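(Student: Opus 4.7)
The plan is to apply Maximality of the uniform together with Subadditivity to a uniformly random element of the set being counted, then exploit concavity and monotonicity of the binary entropy function to pass from a per-coordinate bound to the global bound $2^{H(\alpha)n}$.

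First I would let $\mathcal{C} = \{S \subseteq [n] : |S| \leq \alpha n\}$, so that $|\mathcal{C}| = \sum_{i \leq \alpha n}\binom{n}{i}$, and let $X$ be chosen uniformly from $\mathcal{C}$, represented as its indicator vector $(X_1, \ldots, X_n) \in \{0,1\}^n$. By Maximality of the uniform (Property~\ref{property-uniform}), $H(X_1, \ldots, X_n) = \log |\mathcal{C}|$, and by Subadditivity (Property~\ref{property-subadditivity}),
$$
\log |\mathcal{C}| \;\leq\; \sum_{i=1}^n H(X_i).
$$
Since each $X_i$ is Bernoulli with parameter $p_i := \Pr(i \in X)$, we have $H(X_i) = H(p_i)$ in the binary-entropy sense of Definition~\ref{defn-binary entropy}.

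Next I would use the size constraint on $X$ to control the average of the $p_i$'s. By linearity of expectation,
$$
\sum_{i=1}^n p_i \;=\; E[|X|] \;\leq\; \alpha n,
$$
so $\bar{p} := \frac{1}{n}\sum_{i=1}^n p_i \leq \alpha$. Applying Jensen's inequality (Theorem~\ref{thm-jensen}) to the concave binary entropy function yields
$$
\frac{1}{n}\sum_{i=1}^n H(p_i) \;\leq\; H(\bar{p}),
$$
and hence $\log|\mathcal{C}| \leq n H(\bar{p})$.

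Finally, because $\alpha \leq 1/2$ forces $\bar{p} \leq \alpha \leq 1/2$, and $H$ is monotone increasing on $[0, 1/2]$, we conclude $H(\bar{p}) \leq H(\alpha)$, so $\log|\mathcal{C}| \leq nH(\alpha)$ and exponentiating gives the desired bound. There is no real obstacle here; the only step that uses the hypothesis $\alpha \leq 1/2$ is the last monotonicity step, and without it the analogous bound would fail (one would have to replace $H(\alpha)$ by $H(\min\{\alpha, 1/2\}) = 1$ on the right-hand side, which is uninformative for $\alpha > 1/2$ since $\sum_i \binom{n}{i} = 2^n$). This proof is essentially the textbook illustration of how Maximality of the uniform plus Subadditivity plus concavity of $H$ combine into a clean counting tool.
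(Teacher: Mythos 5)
Your proof is correct and follows the same overall strategy as the paper's: uniform $X$ on $\mathcal{C}$, indicator vector, Maximality of the uniform plus Subadditivity, then a bound on the coordinate entropies. The one place you deviate is the final step. The paper invokes symmetry to note that all the $p_i$ are equal to a common $p$, and then argues $p\leq\alpha$ by conditioning on the size of $X$ (given $|X|=k$, each element lies in $X$ with probability $k/n\leq\alpha$). You instead keep the $p_i$ possibly distinct, bound their average by $\alpha$ via linearity of expectation ($\sum_i p_i = E[|X|]\leq\alpha n$), and then use Jensen's inequality on the concave binary entropy to get $\frac{1}{n}\sum_i H(p_i)\leq H(\bar p)\leq H(\alpha)$. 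Your variant is slightly more robust: it does not require the family $\mathcal{C}$ to be invariant under permutations of $[n]$, only that the expected size of a uniform member is at most $\alpha n$, so the same argument bounds $|\mathcal{A}|\leq 2^{H(\alpha)n}$ for any family $\mathcal{A}$ of subsets of $[n]$ of average size at most $\alpha n$. The paper's version is marginally shorter since symmetry collapses the sum to $nH(X_1)$ without a second appeal to Jensen. Both uses of the hypothesis $\alpha\leq 1/2$ (via monotonicity of $H$ on $[0,1/2]$) are identical.
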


\begin{proof}
Let ${\mathcal C}$ be the set of all subsets of $[n]$ of size at most $\alpha n$; note that $|{\mathcal C}|=\sum_{i \leq \alpha n} \binom{n}{i}$. Let $X$ be a uniformly chosen member of ${\mathcal C}$; by Maximality of the uniform, it is enough to show $H(X) \leq H(\alpha)n$.

View $X$ as the random vector $(X_1, \ldots, X_n)$, where $X_i$ is the indicator function of the event $\{i \in X\}$. By Subadditivity and symmetry,
$$
H(X) \leq H(X_1) + \ldots + H(X_n) = nH(X_1).
$$
So now it is enough to show $H(X_1) \leq H(\alpha)$. To see that this is true, note that $H(X_1) = H(p)$, where $p=\Pr(1 \in X)$. We have $p \leq \alpha$ (conditioned on $X$ having size $\alpha n$, $\Pr(i \in X)$ is exactly $\alpha$, and conditioned on $X$ having any other size it is strictly less than $\alpha$), and so, since $\alpha \leq 1/2$, $H(p) \leq H(\alpha)$.
\end{proof}

Theorem \ref{thm-sums of binomial coefficients} can be used to quickly obtain the following concentration inequality for the balanced ($p=1/2$) binomial distribution, a weak form of the Chernoff bound.
\begin{exercise} \label{exercise-chernof}
Let $X$ be a binomial random variable with parameters $n$ and $1/2$. Show that for every $c \geq 0$,
$$
\Pr(|X - n/2| \geq c\sigma) \leq 2^{1-c^2/2},
$$
where $\sigma = \sqrt{n}/2$ is the standard deviation of $X$.
\end{exercise}

\subsection{The coin-weighing problem} \label{subsec-Pippinger-first}

Suppose we are given $n$ coins, some of which are pure and weigh $a$ grams, and some of which are counterfeit and weight $b$ grams, with $b<a$. We are given access to an accurate scale (not a balance), and wish to determine which are the counterfeit coins using as few weighings as possible, with a sequence of weighings announced in advance. How many weighings are needed to isolate the counterfeit coins? (A very specific version of this problem is due to Shapiro \cite{Shapiro}.)

When a set of coins is weighed, the information obtained is the number of counterfeit coins among that set. Suppose that we index the coins by elements of $[n]$. If the sequence of subsets of coins that we weigh is $D_1, \ldots, D_\ell$, then the set ${\mathcal D}=\{D_1, \ldots, D_\ell\}$ must form what is called a {\em distinguishing family} for $[n]$ --- it must be such that for every $A, B \subseteq [n]$ with $A \neq B$, there is a $D_i \in {\mathcal D}$ with $|A \cap D_i| \neq |B \cap D_i|$ --- for if not, and the $D_i$'s fail to distinguish a particular pair $A, B$, then our weighings would not be able distinguish between $A$ or $B$ being the set of counterfeit coins. On the other hand, if the $D_i$ do form a distinguishing family, then they also form a good collection of weighings --- if $A$ is the collection of counterfeit coins, then on observing the vector $(|A \cap D_i|: i=1, \ldots, \ell)$ we can determine $A$, since $A$ is the unique subset of $[n]$ that gives rise to that particular vector of intersections.

It follows that determining the minimum number of weighings required is equivalent to the combinatorial question of determining $f(n)$, the minimum size of a distinguishing family for $[n]$. Cantor and Mills \cite{CantorMills} and Lindstr\"om \cite{Lindstrom} independently established the upper bound
$$
f(n) \leq \frac{2n}{\log n}\left(1+ O\left(\frac{\log \log n}{\log n}\right)\right)
$$
while Erd\H{o}s and R\'enyi \cite{ErdosRenyi} and (independently) Moser \cite{Moser} obtained the lower bound
\begin{equation} \label{eq-coinlb}
f(n) \geq \frac{2n}{\log n}\left(1+ \Omega\left(\frac{1}{\log n}\right)\right).
\end{equation}
(See the note at the end of \cite{ErdosRenyi} for the rather involved history of these bounds).
Here we give a short entropy proof of (\ref{eq-coinlb}). A proof via information theory of a result a factor of 2 weaker was described (informally) by Erd\H{o}s and R\'enyi \cite{ErdosRenyi}; to the best of our knowledge this is the first application of ideas from information theory to a combinatorial problem. Pippinger \cite{Pippinger} recovered the factor of 2 via a more careful entropy argument.

Let $X$ be a uniformly chosen subset of $[n]$, so that (by Maximality of the uniform) $H(X) =n$. By the discussion earlier, observing $X$ is equivalent to observing the vector $(|X \cap D_i|:i = 1, \ldots, \ell)$ (both random variables have the same distribution), and so
$$
H(X) = H((|X \cap D_i|:i = 1, \ldots, \ell)) \leq \sum_{i=1}^\ell H(|X\cap D_i|),
$$
the inequality by Subadditivity. Since $|X\cap D_i|$ can take on at most $n+1$ values, we have (again by Maximality of the uniform) $H(|X\cap D_i|) \leq \log(n+1)$. Putting all this together we obtain (as Erd\H{o}s and R\'enyi did)
$$
n=H(X) \leq \ell \log(n+1)
$$
or $\ell \geq n/\log(n+1)$, which falls short of (\ref{eq-coinlb}) by a factor of $2$.

To gain back this factor of $2$, we need to be more careful in estimating $H(|X\cap D_i|)$. Observe that $|X \cap D_i|$ is a binomial random variable with parameters $d_i$ and $1/2$, where $d_i=|D_i|$, and so has entropy
$$
\sum_{j=0}^{d_i} \binom{d_i}{j} 2^{-j} \log \left(\frac{2^{d_i}}{\binom{d_i}{j}}\right).
$$
If we can show that this is at most $(1/2)\log d_i + C$ (where $C$ is some absolute constant), then the argument above gives
$$
n \leq \ell \left(\frac{1}{2} \log n + O(1)\right),
$$
which implies (\ref{eq-coinlb}). We leave the estimation of the binomial random variable's entropy as an exercise; the intuition is that the vast majority of the mass of the binomial is within 10 (say) standard deviations of the mean (a consequence, for example, of Exercise \ref{exercise-chernof}, but Tchebychev's inequality would work fine here), and so only $\sqrt{d_i}$ of the possible values that the binomial takes on contribute significantly to its entropy.
\begin{exercise} \label{exercise-binentest}
Show that there's a constant $C>0$ such that for all $m$,
$$
\sum_{j=0}^m \binom{m}{j} 2^{-j} \log \left(\frac{2^m}{\binom{m}{j}}\right) \leq \frac{\log m}{2} + C.
$$
\end{exercise}


\subsection{The Loomis-Whitney theorem} \label{subsec-loomiswhitney}

How large can a measurable body in ${\mathbb R}^n$ be, in terms of the volumes of its $(n-1)$-dimensional projections? The following theorem of Loomis and Whitney \cite{LoomisWhitney} gives a tight bound. For a measurable body $B$ in ${\mathbb R}^n$, and for each $j \in [n]$, let $B_j$ be the projection of $B$ onto the hyperplane $x_j=0$; that is, $B_j$ is the set of all $(x_1, \ldots, x_{j-1}, x_{j+1}, \ldots, x_n)$ such that there is some $x_j \in {\mathbb R}$ with $(x_1, \ldots, x_{j-1}, x_j, x_{j+1}, \ldots, x_n) \in B$.
\begin{thm} \label{thm-loomis-whitney}
Let $B$ be a measurable body in ${\mathbb R}^n$. Writing $|\cdot|$ for volume,
$$
|B| \leq \prod_{j=1}^n |B_j|^{1/(n-1)}.
$$
This bound is tight, for example when $B$ is a cube.
\end{thm}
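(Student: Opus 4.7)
The plan is to prove the theorem in two steps: first establish the discrete analogue for finite subsets of $\mathbb{Z}^n$, which is an immediate consequence of Combinatorial Shearer's lemma (Lemma \ref{lemma-comb-Shearer}); then pass to the continuous setting by approximating $B$ with a lattice of small cubes and taking a limit.

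For the discrete step, I would let $\mathcal{A} \subseteq \mathbb{Z}^n$ be finite and denote by $\mathcal{A}_j$ its projection onto the hyperplane $x_j = 0$. Applying Combinatorial Shearer's lemma with the covering family $\mathcal{F} = \{F_1, \ldots, F_n\}$, where $F_j = [n] \setminus \{j\}$, yields $|\mathcal{A}| \le \prod_{j=1}^n |\mathcal{A}_j|^{1/(n-1)}$: each coordinate $i \in [n]$ lies in exactly $t = n-1$ of the $F_j$, and $\mathrm{trace}_{F_j}(\mathcal{A})$ is precisely $\mathcal{A}_j$. (Equivalently, if $X$ is uniform on $\mathcal{A}$, view it as $X = (X_1, \ldots, X_n)$, bound $H(X) = \log|\mathcal{A}|$ by $\frac{1}{n-1}\sum_j H(X_{F_j})$ using Shearer's lemma, and apply Maximality of the uniform to each $H(X_{F_j})$.)

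For the continuous step, I would fix $\epsilon > 0$ and let $\mathcal{A}_\epsilon \subseteq \epsilon \mathbb{Z}^n$ be the set of lattice points $z$ for which the half-open cube $z + [0,\epsilon)^n$ meets $B$. Then $|B| \le |\mathcal{A}_\epsilon|\epsilon^n$, the projection $(\mathcal{A}_\epsilon)_j \subseteq \epsilon \mathbb{Z}^{n-1}$ forms a lattice cover of $B_j$ satisfying $|B_j| \le |(\mathcal{A}_\epsilon)_j|\epsilon^{n-1}$, and (assuming $B$ and the $B_j$ are Jordan-measurable) both approximations converge to the true volumes as $\epsilon \to 0$. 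Applying the discrete inequality to $\mathcal{A}_\epsilon$ and multiplying both sides by $\epsilon^n = \prod_{j=1}^n \epsilon^{(n-1)/(n-1)}$ gives
$$|\mathcal{A}_\epsilon|\epsilon^n \le \prod_{j=1}^n \bigl(|(\mathcal{A}_\epsilon)_j|\epsilon^{n-1}\bigr)^{1/(n-1)},$$
and letting $\epsilon \to 0$ yields $|B| \le \prod_{j=1}^n |B_j|^{1/(n-1)}$. Tightness for $B = [0,s]^n$ is immediate, since $|B| = s^n$ and each $|B_j| = s^{n-1}$. The main obstacle is purely measure-theoretic rather than combinatorial: one must verify that the outer lattice-cube approximations really do converge to $|B|$ and each $|B_j|$, and that the coordinate projection of the cubical cover of $B$ is indeed a cubical cover of $B_j$ (no boundary effects get lost). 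Under any standard regularity hypothesis on a ``measurable body,'' this is a routine exercise in real analysis, and all of the combinatorial substance of the theorem sits in the single application of Shearer's lemma above.
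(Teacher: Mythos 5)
Your proposal is correct and follows essentially the same route as the paper: discretize $B$ into a union of small axis-parallel cubes, apply Shearer's lemma (in either its entropy or combinatorial form, which the paper shows are equivalent) with the covering family $F_j=[n]\setminus\{j\}$ and $t=n-1$, note that the trace/support on $F_j$ is exactly the projection $B_j$, and recover the continuous statement by a scaling and limiting argument. The only difference is cosmetic --- you spell out the $\epsilon$-lattice limiting step that the paper dismisses as ``standard scaling and limiting arguments.''
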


\begin{proof}
We prove the result in the case when $B$ is a union of axis-parallel cubes with side-lengths $1$ centered at points with integer coordinates (and we identify a cube with the coordinates of its center); the general result follows from standard scaling and limiting arguments.

Let $X$ be a uniformly selected cube from $B$; we write $X$ as $(X_1, \ldots, X_n)$, where $X_i$ is the $i$th coordinate of the cube. We upper bound $H(X)$ by applying Shearer's lemma (Lemma \ref{lemma-Shearer}) with ${\mathcal F} = \{F_1, \ldots, F_n\}$, where $F_j=[n] \setminus j$. For this choice of ${\mathcal F}$ we have $t=n-1$. The support of $X_{F_j}$ (i.e., the set of values taken by $X_{F_j}$ with positive probability) is exactly (the set of centers of the $(d-1)$-dimensional cubes comprising) $B_j$. So, using Maximality of the uniform twice, we have
\begin{eqnarray*}
\log |B| & = & H(X) \\
& \leq & \frac{1}{n-1}\sum_{j=1}^n H(X_{F_j}) \\
& \leq & \frac{1}{n-1}\sum_{j=1}^n \log |B_j|,
\end{eqnarray*}
from  which the theorem follows.
\end{proof}

\subsection{Intersecting families} \label{subsec-triangles}

Let ${\mathcal G}$ be a family of graphs on vertex set $[n]$, with the property that for each $G_1, G_2 \in {\mathcal G}$, $G_1 \cap G_2$ contains a triangle (i.e, there are three vertices $i, j, k$ such that each of $ij$, $ik$, $jk$ is in the edge set of both $G_1$ and $G_2$). At most how large can ${\mathcal G}$ be? This question was first raised by Simonovits and S\'os in 1976.

Certainly $|{\mathcal G}|$ can be as large as $2^{\binom{n}{2}-3}$: consider the family ${\mathcal G}$ of all graphs that include a particular triangle. In the other direction, it can't be larger than $2^{\binom{n}{2}-1}$, by virtue of the well-known result that a family of distinct sets on ground set of size $m$, with the property that any two members of the family have non-empty intersection, can have cardinality at most $2^{m-1}$ (the edge sets of elements of ${\mathcal G}$ certainly form such a family, with $m=\binom{n}{2}$). In \cite{ChungFranklGrahamShearer} Shearer's lemma is used to improve this easy upper bound.
\begin{thm} \label{thm-triangle intersection}
With ${\mathcal G}$ as above, $|{\mathcal G}| \leq 2^{\binom{n}{2}-2}$.
\end{thm}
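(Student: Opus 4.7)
My plan is to apply Combinatorial Shearer's Lemma (Lemma \ref{lemma-comb-Shearer}) with a covering family built from all bipartitions of the vertex set. For each $A \subseteq [n]$, let $F_A = E(A) \cup E([n]\setminus A) \subseteq E(K_n)$ denote the set of edges with both endpoints on the same side of the bipartition $\{A, [n]\setminus A\}$. By pigeonhole, any three vertices contain two on the same side, so $F_A$ contains at least one edge of every triangle of $K_n$.

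This triangle-hitting property of $F_A$ is exactly what is needed to make ${\rm trace}_{F_A}({\mathcal G})$ an intersecting family on ground set $F_A$: given $G_1, G_2 \in {\mathcal G}$, the triangle guaranteed in $G_1 \cap G_2$ contributes at least one edge to $F_A \cap G_1 \cap G_2$. The standard intersecting-family bound (cited in the paragraph introducing the theorem) then gives $|{\rm trace}_{F_A}({\mathcal G})| \leq 2^{|F_A|-1}$.

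Taking ${\mathcal F} = \{F_A : A \subseteq [n]\}$, I would next compute the covering parameters. An edge $\{u,v\}$ lies in $F_A$ exactly when $u$ and $v$ are on the same side, which happens for $2 \cdot 2^{n-2} = 2^{n-1}$ of the $2^n$ subsets $A$. So every edge is covered exactly $t=2^{n-1}$ times while $|{\mathcal F}| = 2^n$, and by double-counting $\sum_A |F_A| = \binom{n}{2} \cdot t$. Substituting into Combinatorial Shearer yields
$$
\log|{\mathcal G}| \leq \frac{1}{t}\sum_{A\subseteq[n]}(|F_A|-1) = \binom{n}{2} - \frac{|{\mathcal F}|}{t} = \binom{n}{2} - 2,
$$
which is the desired bound.

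The main conceptual hurdle is the choice of ${\mathcal F}$. A single triangle-hitting set only recovers the trivial intersecting-family bound $2^{\binom{n}{2}-1}$, and for $n \geq 6$ one cannot partition $E(K_n)$ into two triangle-hitting sets (their complements would be triangle-free, contradicting $R(3,3)=6$), so a direct two-set application of Shearer is hopeless. Averaging over \emph{all} $2^n$ bipartitions bypasses this obstruction, since Shearer only requires the ratio $|{\mathcal F}|/t$ to equal $2$, with no requirement that any two specific sets partition the edges.
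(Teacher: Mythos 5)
Your proof is correct and follows essentially the same route as the paper: Combinatorial Shearer's lemma applied to the families of ``within-part'' edges of bipartitions of $[n]$, with the trivial intersecting-family bound $2^{|F|-1}$ on each trace. The only difference is that the paper restricts to equipartitions (so that each $|U(A,B)| \leq \frac{1}{2}\binom{n}{2}$ and the exponent $\binom{n}{2}(1-1/m)$ can be bounded), whereas you average over all $2^n$ bipartitions and extract the $-2$ directly from $|{\mathcal F}|/t = 2$; this is a clean, slightly tidier variant of the same argument.
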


\begin{proof}
Identify each graph $G \in {\mathcal G}$ with its edge set, so that ${\mathcal G}$ is now a set of subsets of a ground-set $U$ of size $\binom{n}{2}$. For each unordered equipartition $A \cup B = [n]$ (satisfying $\left||A|-|B|\right| \leq 1$), let $U(A,B)$ be the subset of $U$ consisting of all those edges that lie entirely inside $A$ or entirely inside $B$. We will apply Combinatorial Shearer's lemma with ${\mathcal F} = \{U(A,B)\}$.

Let $m=|U(A,B)|$ (this is independent of the particular choice of equipartition). Note that
$$
m = \left\{
\begin{array}{ll}
2\binom{n/2}{2} & \mbox{if $n$ is even}\\
\binom{\lfloor n/2 \rfloor}{2} + \binom{\lceil n/2 \rceil}{2} & \mbox{if $n$ is odd;}\\
\end{array}
\right.
$$
in either case, $m \leq \frac{1}{2}\binom{n}{2}$. Note also that by a simple double-counting argument we have
\begin{equation} \label{eq-doublecounting}
m|{\mathcal F}| = \binom{n}{2} t
\end{equation}
where $t$ is the number of elements of ${\mathcal F}$ in which each element of $U$ occurs.

Observe that ${\rm trace}_{U(A,B)}({\mathcal G})$ forms an intersecting family of subsets of $U(A,B)$; indeed, for any $G, G' \in {\mathcal G}$, $G \cap G'$ has a triangle $T$, and since the complement of $U(A,B)$ (in $U$) is triangle-free (viewed as a graph on $[n]$), at least one of the edges of $T$ must meet $U(A,B)$. So,
$$
|{\rm trace}_{U(A,B)} ({\mathcal G})| \leq 2^{m - 1}.
$$
By Lemma \ref{lemma-comb-Shearer},
\begin{eqnarray*}
|{\mathcal G}| & \leq & \left(2^{m - 1}\right)^\frac{|{\mathcal F}|}{t} \\
& = & 2^{\binom{n}{2}\left(1-\frac{1}{m}\right)} \\
& \leq & 2^{\binom{n}{2}-2},
\end{eqnarray*}
as claimed (the equality here uses (\ref{eq-doublecounting})).
\end{proof}

Recently Ellis, Filmus and Friedgut \cite{EllisFilmusFriedgut} used discrete Fourier analysis to obtain the sharp bound $|{\mathcal G}| \leq 2^{\binom{n}{2}-3}$ that had been conjectured by Simonovits and S\'os.

\section{Embedding copies of one graph in another} \label{sec-embedding}

We now move on to our first more substantial application of entropy to combinatorial enumeration; the problem of maximizing the number of copies of a graph that can be embedded in a graph on a fixed number of edges.

\subsection{Introduction to the problem}  \label{subsec-embeddingintro}

At most how many copies of a fixed graph $H$ can there be in a graph with $\ell$ edges? More formally, define an {\em embedding} of a graph $H$ into a graph $G$ as an injective function $f$ from $V(H)$ to $V(G)$ with the property that $f(x)f(y) \in E(G)$ whenever $xy \in E(H)$. Let ${\rm embed}(H,G)$ be the number of embeddings of $H$ into $G$, and let ${\rm embed}(H,\ell)$ be the maximum of ${\rm embed}(H,G)$ as $G$ varies over all graphs on $\ell$ edges. The question we are asking is: what is the value of ${\rm embed}(H,\ell)$ for each $H$ and $\ell$?

Consider for example $H=K_3$, the triangle. Fix $G$ with $\ell$ edges. Suppose that $x \in V(H)$ is mapped to $v \in V(G)$. At most how many ways can this partial embedding be completed? Certainly no more that $2\ell$ ways (the remaining two vertices of $H$ must be mapped, in an ordered way, to one of the $\ell$ edges of $G$); but also, no more than $d_v(d_v-1) \leq d_v^2$ ways, where $d_v$ is the degree of $v$ (the remaining two vertices of $H$ must be mapped, in an ordered way, to neighbors of $v$). Since $\min\{d_v^2, 2\ell\} \leq d_v\sqrt{2\ell}$, a simple union bound gives
$$
{\rm embed}(H,G) \leq \sum_{v \in V(G)} d_v\sqrt{2\ell} = 2\sqrt{2} \ell^{3/2},
$$
and so ${\rm embed}(H,\ell) \leq 2\sqrt{2} \ell^{3/2}$. On the other hand, this is the right order of magnitude, since the complete graph of $\sqrt{2\ell}$ vertices admits $\sqrt{2\ell}(\sqrt{2\ell}-1)(\sqrt{2\ell}-2) \approx 2\sqrt{2}\ell^{3/2}$ embeddings of $K_3$, and has around $\ell$ edges.

The following theorem was first proved by Alon \cite{Alon2}. In Section \ref{subsec-embedproofs} we give a proof based on Shearer's lemma due to Friedgut and Kahn \cite{FriedgutKahn}. The definition of $\rho^\star$ is given in Section \ref{subsec-fractional}.
\begin{thm} \label{thm-embeddingub}
For all graphs $H$ there is a constant $c_1>0$ such that for all $\ell$,
$$
{\rm embed}(H,\ell) \leq c_1 \ell^{\rho^\star(H)}
$$
where $\rho^\star(H)$ is the fractional cover number of $H$.
\end{thm}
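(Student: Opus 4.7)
The plan is to fix a graph $G$ on $\ell$ edges achieving ${\rm embed}(H, G) = {\rm embed}(H, \ell)$, label $V(H) = [k]$, and let $f$ be a uniformly random embedding of $H$ into $G$. Writing $X = (X_1, \ldots, X_k)$ with $X_i = f(i)$, Maximality of the uniform (Property \ref{property-uniform}) gives $H(X) = \log {\rm embed}(H, G)$, so the theorem reduces to showing $H(X) \leq \rho^\star(H) \log(2\ell) + O(1)$.

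The key local bound is that for each edge $e = ij \in E(H)$, the pair $X_e := (X_i, X_j)$ is an ordered pair whose underlying unordered pair lies in $E(G)$, so $X_e$ takes at most $2\ell$ values and $H(X_e) \leq \log(2\ell)$, again by Maximality of the uniform. To combine these edge-indexed local bounds into a global bound on $H(X)$, I would apply Shearer's lemma (Lemma \ref{lemma-Shearer}) with covering family indexed by $E(H)$ and weighted by an optimal fractional edge cover of $H$: nonnegative weights $(y_e)_{e \in E(H)}$ satisfying $\sum_{e \ni i} y_e \geq 1$ for every $i \in V(H)$ and minimizing $\sum_e y_e = \rho^\star(H)$.

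Since Lemma \ref{lemma-Shearer} is stated for integer multiplicities, I would select a rational optimum (the defining LP has rational data, so rational optima exist) with common denominator $N$, and form the multiset $\mathcal{F}$ on $[k]$ containing each edge $e$ (viewed as a $2$-element subset of $[k]$) exactly $N y_e$ times. Every $i \in [k]$ then lies in at least $t = N$ members of $\mathcal{F}$, so Shearer's lemma delivers
$$
H(X) \leq \frac{1}{N} \sum_{e \in E(H)} N y_e \, H(X_e) \leq \Bigl(\sum_{e \in E(H)} y_e\Bigr) \log(2\ell) = \rho^\star(H) \log(2\ell),
$$
which exponentiates to ${\rm embed}(H, G) \leq (2\ell)^{\rho^\star(H)}$, giving the theorem with $c_1 = 2^{\rho^\star(H)}$.

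The main conceptual step is choosing the right covering family for Shearer's lemma; indexing by the edges of $H$ is natural because each edge of $H$ produces a clean entropy constraint via the edges of $G$. Fractional (rather than integer) weights are genuinely required: on $H = C_5$, for example, the uniform weighting $y_e = 1/2$ gives the sharp exponent $5/2$, whereas any integer cover of $C_5$ has size $3$ and would overshoot. The only technical wrinkle is passing from the LP optimum to the integer multiplicities required by Lemma \ref{lemma-Shearer}, which the common-denominator scaling resolves at no cost.
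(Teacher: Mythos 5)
Your proof is correct and is essentially identical to the paper's argument: the paper also takes $X$ to be a uniform embedding, applies Shearer's lemma with the family consisting of $C\varphi^\star(e)$ copies of each edge $e$ of $H$ (for an optimal rational fractional cover $\varphi^\star$ scaled by a common denominator $C$), and uses $H(X_u,X_v)\le\log(2\ell)$ to conclude $H(X)\le\rho^\star(H)\log(2\ell)$. Even your constant $c_1=2^{\rho^\star(H)}$ matches what the paper's computation yields.
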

There is a lower bound that matches the upper bound up to a constant.
\begin{thm} \label{thm-embeddinglb}
For all graphs $H$ there is a constant $c_2>0$ such that for all $\ell$,
$$
{\rm embed}(H,\ell) \geq c_2 \ell^{\rho^\star(H)}.
$$
\end{thm}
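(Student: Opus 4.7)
The plan is to produce, for each $\ell$, an explicit graph $G$ on at most $\ell$ edges admitting at least $c_2\ell^{\rho^\star(H)}$ embeddings of $H$, via a \emph{blow-up} of $H$. For each $v \in V(H)$ choose a positive integer $a_v$, take pairwise disjoint sets $S_v$ with $|S_v| = a_v$, and put into $G$ every edge between $S_u$ and $S_v$ whenever $uv \in E(H)$. Then $G$ has exactly $\sum_{uv\in E(H)} a_u a_v$ edges, and any $f\colon V(H)\to V(G)$ with $f(v) \in S_v$ is automatically injective and edge-preserving. Hence $G$ admits at least $\prod_v a_v$ embeddings of $H$ (possibly more, from non-aligned maps, but we will not need them).

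The task reduces to maximizing $\prod_v a_v$ subject to $\sum_{uv\in E(H)} a_u a_v \leq \ell$. Writing $a_v \approx \ell^{x_v}$ and taking logarithms, this becomes the linear program
$$
\text{maximize } \sum_{v \in V(H)} x_v \quad\text{subject to}\quad x_u + x_v \leq 1 \text{ for all } uv \in E(H), \ \ x_v \geq 0,
$$
whose optimum is the fractional independence number $\alpha^\star(H)$. The dual of this LP is exactly the fractional edge-cover LP defining $\rho^\star(H)$, so by LP duality $\alpha^\star(H) = \rho^\star(H)$. This is the step at which $\rho^\star(H)$ enters the picture, and it matches the exponent in the upper bound of Theorem~\ref{thm-embeddingub}.

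To finish, fix an optimal rational solution $(x_v)$ of the LP above and set $a_v = \max\{1,\lfloor c\,\ell^{x_v}\rfloor\}$, where $c = c(H) > 0$ is chosen small enough (e.g.\ $c = 1/|E(H)|$) that each edge contributes $a_u a_v \leq c^2 \ell^{x_u + x_v} \leq c^2 \ell$, guaranteeing $\sum_{uv} a_u a_v \leq \ell$. Once $\ell$ is large enough that $c\ell^{x_v} \geq 2$ for every $v$ with $x_v > 0$, we get $a_v \geq (c/2)\ell^{x_v}$, and hence
$$
\prod_{v \in V(H)} a_v \;\geq\; (c/2)^{|V(H)|}\,\ell^{\sum_v x_v} \;=\; c_2\,\ell^{\rho^\star(H)};
$$
the finitely many small values of $\ell$ are absorbed into $c_2$. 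The only delicate point is handling vertices with $x_v = 0$ in the optimal fractional independent set (e.g.\ the center of a star $K_{1,m}$, where the optimum puts weight $1$ on every leaf and $0$ on the center); these simply get $a_v = 1$, and the edge bound still holds because along any incident edge $uv$ one has $x_u \leq 1$. Apart from this calibration, the argument has no real obstacle: the construction is essentially forced, and the main conceptual point is recognizing the LP duality that identifies $\alpha^\star(H)$ with $\rho^\star(H)$.
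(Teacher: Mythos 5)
Your proposal is correct and is essentially the paper's own argument: the paper also constructs the blow-up $H^\star$ with parts of size $(\ell/|E(H)|)^{\psi^\star(v)}$ for an optimal fractional independence function $\psi^\star$, bounds the edge count using the fractional independence constraints, counts the $\prod_v |V(v)|$ aligned embeddings, and invokes LP duality $\alpha^\star(H)=\rho^\star(H)$. The only difference is cosmetic --- you handle the integrality of the part sizes explicitly with floors and a calibration constant, where the paper simply remarks that this causes no essential difficulty.
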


\subsection{Background on fractional covering and independent sets} \label{subsec-fractional}

A vertex cover of a graph $H$ is a set of edges with each vertex included in at least one edge in the set, and the vertex cover number $\rho(H)$ is defined to be the minimum number of edges in a vertex cover. Equivalently, we may define a cover function to be a $\varphi:E(H) \rightarrow \{0,1\}$ satisfying
\begin{equation} \label{eq-coverconstaints}
\sum_{e \in E(H)~:~v \in e} \varphi(e) \geq 1
\end{equation}
for each $v \in V(H)$, and then define $\rho(H)$ to be the minimum of $\sum_{e \in E(H)} \varphi(e)$ over all cover functions $\varphi$.

This second formulation allows us to define a {\em fractional} version of the cover number, by relaxing the condition that $\varphi(e)$ must be an integer. Define a {\em fractional} cover function to be a $\varphi:E(H) \rightarrow [0,1]$ satisfying (\ref{eq-coverconstaints})
for each $v \in V(H)$, and then define $\rho^\star(H)$ to be the minimum of $\sum_{e \in E(H)} \varphi(e)$ over all fractional cover functions $\varphi$; note that $\rho^\star(H) \leq \rho(H)$.

An independent set in $H$ is a set of vertices with each edge touching at most one vertex in the set, and the independence number $\alpha(H)$ is defined to be the maximum number of vertices in an independent set. Equivalently, define an independence function to be a $\psi:V(H) \rightarrow \{0,1\}$ satisfying
\begin{equation} \label{eq-indconstaints}
\sum_{v \in V(H)~:~v \in e} \psi(v) \leq 1
\end{equation}
for each $e \in E(H)$, and then define $\alpha(H)$ to be the maximum of $\sum_{v \in V(H)} \psi(v)$ over all independence functions $\psi$. Define a {\em fractional} independence function to be a $\psi:V(H) \rightarrow [0,1]$ satisfying (\ref{eq-indconstaints})
for each $e \in E(H)$, and then define $\alpha^\star(H)$ to be the maximum of $\sum_{v \in V(H)} \psi(v)$ over all fractional independence functions $\psi$; note that $\alpha(H) \leq \alpha^\star(H)$.

We always have $\alpha(H) \leq \rho(H)$ (a vertex cover needs to use a different edge for each vertex of an independent set), and usually $\alpha(H) < \rho(H)$ (as for example when $H=K_3$). The gap between these two parameters closes, however, when we pass to the fractional variants. By the fundamental theorem of linear programming duality we have
\begin{equation} \label{eq-duality}
\alpha^\star(H) = \rho^\star(H)
\end{equation}
for every $H$.

\subsection{Proofs of Theorem \ref{thm-embeddingub} and \ref{thm-embeddinglb}} \label{subsec-embedproofs}

We begin with Theorem \ref{thm-embeddingub}.
Let the vertices of $G$ be $\{v_1, \ldots, v_{|V(H)|}\}$. Let $G$ be a fixed graph on $\ell$ edges and let $X$ be a uniformly chosen embedding of $H$ into $G$. Encode $X$ as the vector $(X_1, \ldots, X_{|V(H)|})$, where $X_i$ is the vertex of $G$ that $i$ is mapped to by $X$. By Maximality of the uniform, $H(X) = \log ({\rm embed}(H,G))$, so if we can show $H(X) \leq \rho^\star(H) \log (c \ell)$ for some constant $c=c(H)$ then we are done.

Let $\varphi^\star:E(H) \rightarrow [0,1]$ be an optimal fractional vertex cover of $H$, that is, one satisfying $\sum_{e \in E(H)} \varphi(e) = \rho^\star(H)$. We may assume that $\varphi(e)$ is rational for all $e \in E(H)$, and we may choose an integer $C$ such that $C\varphi(e)$ is an integer for each such $e$.

We will apply Shearer's lemma with ${\mathcal F}$ consisting of $C\varphi^\star(e)$ copies of the pair $\{u,v\}$, where $e=uv$, for each $e \in E(H)$. Each $v \in V(H)$ appears in at least
$$
\sum_{e \in E(H):v \in e} C\varphi^\star(e) \geq C
$$
members of ${\mathcal F}$ (the inequality using (\ref{eq-coverconstaints})), so by Shearer's lemma
\begin{eqnarray*}
H(X) & \leq & \frac{1}{C} \sum_{e=uv \in E(H)} C\varphi^\star(e)H(X_u,X_v) \\
& \leq & \sum_{e \in E(H)} \varphi^\star(e)\log (2\ell) \\
& = & \rho^\star(H) \log (2 \ell),
\end{eqnarray*}
as required. The second inequality above uses Maximality of the uniform ($X_u$ and $X_v$ must be a pair of adjacent vertices, and there are $2\ell$ such pairs in a graph on $\ell$ edges), and the equality uses the fact that $\varphi^\star$ is an optimal fractional vertex cover.

For the proof of Theorem \ref{thm-embeddinglb}, by (\ref{eq-duality}) it is enough to exhibit, for each $\ell$, a single graph $G_\ell$ on at most $\ell$ edges for which ${\rm embed}(H,G_\ell) \geq c_2 \ell^{\alpha^\star(H)}$, where the constant $c_2>0$ is independent of $\ell$. Let $\psi^\star:V(H)\rightarrow [0,1]$ be an optimal fractional independence function (one satisfying $\sum_{v \in V(H)} \psi(v) = \alpha^\star(H)$). Create a graph $H^\star$ on vertex set $\cup_{v \in V(H)} V(v)$, where the $V(v)$'s are disjoint sets with $|V(v)| = (\ell/|E(H)|)^{\psi^\star(v)}$ for each $v \in V(H)$, and with an edge between two vertices exactly when one is in $V(v)$ and the other is in $V(w)$ for some $vw \in E(H)$ (note that $|V(v)|$ as defined may not be an integer, but this makes no essential difference to the argument, and dealing with this issue formally only obscures the proof with excessive notation).

The number of edges in $H^\star$ is
$$
\sum_{e = vw \in E(H)} \left(\frac{\ell}{|E(H)|}\right)^{\psi^\star(v) + \psi^\star(w)} \leq \sum_{e = vw \in E(H)} \frac{\ell}{|E(H)|} \leq \ell,
$$
the first inequality using (\ref{eq-indconstaints}). Any function $f:V(H) \rightarrow V(H^\star)$ satisfying $f(v) \in V(v)$ for each $v \in V(H)$ is an embedding of $H$ into $H^\star$, and so
$$
{\rm embed}(H,H^\star) \geq \left(\frac{\ell}{|E(H)|}\right)^{\sum_{v \in V(H)} \psi^\star(v)} = \left(\frac{\ell}{|E(H)|}\right)^{\alpha^\star(H)},
$$
the equality using the fact that $\psi^\star$ is an optimal fractional independent set.

\section{Br\'egman's theorem (the Minc conjecture)} \label{sec-bregman}

Here we present Radhakrishnan's beautiful entropy proof \cite{Radhakrishnan} of Br\'egman's theorem on the maximum permanent of a 0-1 matrix with given row sums.

\subsection{Introduction to the problem} \label{subsec-intro to Bregman}

The {\em permanent} of an $n$ by $n$ matrix $A=(a_{ij})$ is
$$
{\rm perm}(A) = \sum_{\sigma \in S_n} \prod_{i=1}^n a_{i\sigma(i)}
$$
where $S_n$ is the set of permutations of $[n]$. This seems superficially quite similar to the determinant, which differs only by the addition of a factor of $(-1)^{{\rm sgn}(\sigma)}$ in front of the product. This small difference makes all the difference, however: problems involving the determinant are generally quite tractable algorithmically (because Gaussian elimination can be performed efficiently), but permanent problems seems to be quite intractable (in particular, by a Theorem of Valiant \cite{Valiant} the computation of the permanent of a general $n$ by $n$ matrix is $\# P$-hard).

The permanent of a $0$-$1$ matrix has a nice interpretation in terms of perfect matchings (1-regular spanning subgraphs) in a graph. There is a natural one-to-one correspondence between $0$-$1$ $n$ by $n$ matrices and bipartite graphs on fixed color classes each of size $n$: given $A=(a_{ij})$ we construct a bipartite graph $G=G(A)$ on color classes ${\mathcal E} =\{v_1, \ldots, v_n\}$ and ${\mathcal O}=\{w_1, \ldots, w_n\}$ by putting $v_iw_j \in E$ if and only if $a_{ij}=1$. Each $\sigma \in S_n$ that contributes $1$ to ${\rm perm}(A)$ gives rise to the perfect matching ($1$-regular spanning subgraph) $\{v_iw_{\sigma(i)}:i \in [n]\}$ in $G$, and this correspondence is bijective; all other $\sigma \in S_n$ contribute $0$ to ${\rm perm}(A)$. In other words,
$$
{\rm perm}(A) = |{\mathcal M}_{\rm perf}(G)|
$$
where ${\mathcal M}_{\rm perf}(G)$ is the set of perfect matchings of $G$.

In 1963 Minc formulated a natural conjecture concerning the permanent of an $n$ by $n$ $0$-$1$ matrix with all row sums fixed. Ten years later Br\'egman \cite{Bregman} gave the first proof, and the result is now known as Br\'egman's theorem.
\begin{thm} \label{thm-MincBregman} (Br\'egman's theorem)
Let $n$ non-negative integers $d_1, \ldots, d_n$ be given. Let $A=(a_{ij})$ be an $n$ by $n$ matrix with all entries in $\{0,1\}$ and with $\sum_{j=1}^n a_{ij} = d_i$ for each $i=1, \ldots, n$ (that is, with the sum of the row $i$ entries of $A$ being $d_i$, for each $i$). Then
$$
{\rm perm}(A) \leq \prod_{i=1}^n \left(d_i!\right)^\frac{1}{d_i}.
$$
Equivalently, let $G$ be a bipartite graph on color classes ${\mathcal E}=\{v_1, \ldots, v_n\}$, ${\mathcal O}=\{w_1, \ldots, w_n\}$, with each $v_i \in {\mathcal E}$ having degree $d_i$. Then
$$
|{\mathcal M}_{\rm perf}(G)| \leq \prod_{i=1}^n \left(d_i!\right)^\frac{1}{d_i}.
$$
\end{thm}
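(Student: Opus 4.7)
The plan is to follow the entropy-counting template. Let $M$ be a uniformly random perfect matching of $G$, encoded as the random vector $(X_1,\ldots,X_n)$ where $X_i \in \mathcal{O}$ is the $M$-partner of $v_i$. By Maximality of the uniform (Property \ref{property-uniform}), $\log|\mathcal{M}_{\rm perf}(G)| = H(M)$, so it suffices to prove $H(M) \le \sum_{i=1}^n \frac{1}{d_i}\log(d_i!)$. The naive attempt---reveal $X_1, X_2, \ldots, X_n$ in that fixed order using the Chain rule, and note that at stage $i$ the value $X_i$ must be one of the $d_i$ neighbors of $v_i$---yields only $H(M) \le \sum_i \log d_i$, which is not tight enough.

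The key idea (Radhakrishnan's) is to \emph{average the chain-rule expansion over a uniformly random revealing order}. Let $\pi$ be a uniformly random permutation of $[n]$, independent of $M$. For any fixed $\pi$, the Chain rule (Property \ref{property-chain rule}) gives
\[
H(M) \;=\; \sum_{i=1}^n H\bigl(X_i \,\big|\, X_j : \pi(j) < \pi(i)\bigr),
\]
and Conditional maximality of the uniform (Property \ref{property-conditonal-uniform}) bounds each term by $E_M[\log N_i(\pi,M)]$, where
\[
N_i(\pi,M) \;:=\; \bigl|N(v_i) \setminus \{M(v_j) : \pi(j) < \pi(i)\}\bigr|
\]
is the number of neighbors of $v_i$ still ``available'' when $v_i$'s partner is revealed. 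Since the left-hand side does not depend on $\pi$, averaging over $\pi$ and swapping the order of expectation gives
\[
H(M) \;\le\; E_M \sum_{i=1}^n E_\pi \bigl[\log N_i(\pi,M)\bigr].
\]

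The heart of the argument is to evaluate $E_\pi[\log N_i(\pi,M)]$ for fixed $M$ and $i$. Each neighbor $w$ of $v_i$ is the $M$-partner of a unique vertex $v_{j(w)} \in \mathcal{E}$, so the set $T_i(M) := \{v_{j(w)} : w \in N(v_i)\}$ has size exactly $d_i$ and contains $v_i$ itself (taking $w = M(v_i)$). A neighbor $w$ is excluded from $N_i(\pi,M)$ iff $\pi(v_{j(w)}) < \pi(i)$, so $N_i(\pi,M)$ equals the number of elements of $T_i(M)$ whose $\pi$-value is at least $\pi(i)$. Since $\pi$ is uniform, the rank of $v_i$ within $T_i(M)$ is uniform on $\{1,\ldots,d_i\}$, and hence $N_i(\pi,M)$ is uniform on $\{1,\ldots,d_i\}$ as well. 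Therefore
\[
E_\pi \bigl[\log N_i(\pi,M)\bigr] \;=\; \frac{1}{d_i}\sum_{k=1}^{d_i}\log k \;=\; \frac{1}{d_i}\log(d_i!),
\]
and summing over $i$ and exponentiating yields the theorem. The main obstacle is conceptual rather than computational: one must recognize that no single revealing order produces the correct exponent, and that averaging the chain rule over all $n!$ orders converts the crude per-step bound $\log d_i$ into the sharp $\frac{1}{d_i}\log(d_i!)$ via the uniform-rank observation on $T_i(M)$.
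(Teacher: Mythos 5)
Your proof is correct and is essentially the paper's own argument (Radhakrishnan's proof): the same random-revealing-order averaging of the chain rule, the same conditional-maximality bound via the number of still-available neighbors, and the same uniform-rank observation showing $N_i(\pi,M)$ is uniform on $\{1,\ldots,d_i\}$. Your explicit introduction of the set $T_i(M)$ of $\mathcal{E}$-vertices matched to $N(v_i)$ makes the rank argument slightly more transparent than the paper's phrasing, but the route is identical.
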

Notice that the bound is tight: for example, for each fixed $d$ and $n$ with $d|n$, it is achieved by the matrix consisting of $n/d$ blocks down the diagonal with each block being a $d$ by $d$ matrix of all $1$'s, and with zeros everywhere else (or equivalently, by the graph made up of the disjoint union of $n/d$ copies of $K_{d,d}$, the complete bipartite graph with $d$ vertices in each classes).

A short proof of Br\'egman's theorem was given by Schrijver \cite{Schrijver}, and a probabilistic reinterpretation of Schrijver's proof was given by Alon and Spencer \cite{AlonSpencer}. A beautiful proof using subadditivity of entropy was given by Radhakrishnan \cite{Radhakrishnan}, and we present this in Section \ref{subsec-rads proof}. Many interesting open questions remain in this area; we present some of these in Section \ref{subsec-matching open}.

\medskip

Br\'egman's theorem concerns perfect matchings in a bipartite graph. A natural question to ask is: what happens in a general (not necessarily bipartite) graph? Kahn and Lov\'asz answered this question.
\begin{thm} \label{thm-KL} (Kahn-Lov\'asz theorem)
Let $G$ be a graph on $2n$ vertices $v_1, \ldots, v_{2n}$ with each $v_i$ having degree $d_i$. Then
$$
|{\mathcal M}_{\rm perf}(G)| \leq \prod_{i=1}^{2n} \left(d_i!\right)^\frac{1}{2d_i}.
$$
\end{thm}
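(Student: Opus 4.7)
The plan is to reduce to Br\'egman's theorem (Theorem 4.1) via the bipartite double cover of $G$. Let $G'$ be the bipartite graph on vertex set $V(G) \times \{0,1\}$ with edges $(v,0)(w,1)$ for each $vw \in E(G)$. Then $G'$ has two color classes of size $2n$, and both $(v_i,0)$ and $(v_i,1)$ have degree $d_i$ in $G'$. Applying Br\'egman's theorem directly to $G'$ gives
$$|{\mathcal M}_{\rm perf}(G')| \leq \prod_{i=1}^{2n} (d_i!)^{1/d_i}.$$
Taking square roots, the Kahn--Lov\'asz bound will follow provided we can establish the purely combinatorial inequality $|{\mathcal M}_{\rm perf}(G)|^2 \leq |{\mathcal M}_{\rm perf}(G')|$, which is where the real work lies.

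To prove this inequality, I would interpret both sides via the cycle structure of an auxiliary $2$-regular multigraph on $V(G)$. Perfect matchings of $G'$ are in bijection with permutations $\sigma$ of $V(G)$ satisfying $v\sigma(v) \in E(G)$ for all $v$ (match $(v,0)$ with $(\sigma(v),1)$); since $G$ is loopless, every such $\sigma$ is fixed-point-free, and its cycle decomposition determines a spanning $2$-regular multigraph $F$ on $V(G)$, with $2$-cycles of $\sigma$ becoming double edges of $F$ and $k$-cycles for $k \geq 3$ becoming simple $k$-cycles. For a fixed $F$, the number of $\sigma$ inducing it is $2^{c_{\geq 3}(F)}$, where $c_{\geq 3}(F)$ counts the cycles of $F$ of length at least $3$ (each admits two orientations, while a double edge admits only one). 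Hence $|{\mathcal M}_{\rm perf}(G')| = \sum_F 2^{c_{\geq 3}(F)}$, summed over all spanning $2$-regular multigraphs $F$ of $G$.

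Similarly, an ordered pair $(M_1, M_2)$ of perfect matchings of $G$ determines the multiset union $F := M_1 \cup M_2$, again a spanning $2$-regular multigraph of $G$, but now with the additional property that every cycle has even length (double edges come from $M_1 \cap M_2$; the remaining cycles lie in $M_1 \triangle M_2$ and alternate $M_1$-edges with $M_2$-edges). For a fixed such $F$, the number of ordered pairs $(M_1, M_2)$ producing it is $2^{c_{\geq 4}(F)}$ -- one binary choice per cycle of length at least $4$, specifying which of the two alternating edge-classes is named $M_1$ (double edges force both matchings to include the underlying edge). Since multigraphs $F$ with all even cycles form a subset of those counted above, and since $c_{\geq 4}(F) = c_{\geq 3}(F)$ for such $F$, we obtain
$$|{\mathcal M}_{\rm perf}(G)|^2 \;=\; \sum_{F \text{ all even}} 2^{c_{\geq 4}(F)} \;\leq\; \sum_{F} 2^{c_{\geq 3}(F)} \;=\; |{\mathcal M}_{\rm perf}(G')|,$$
as required.

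The main technical step is the careful bookkeeping of these factors of two, and I expect it to be the only real obstacle. The slack in the inequality $|{\mathcal M}_{\rm perf}(G)|^2 \leq |{\mathcal M}_{\rm perf}(G')|$ comes precisely from the multigraphs $F$ that contain an odd cycle; these correspond to permutations of $V(G)$ realizable inside $G'$ but not realizable from any ordered pair of perfect matchings of $G$. All entropic content of the argument is concentrated in the single invocation of Br\'egman's theorem on $G'$; the remainder is a direct orientation-versus-alternation count.
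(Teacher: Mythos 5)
Your proof is correct and is essentially the Alon--Friedland argument the paper itself presents in Section \ref{subsec-AlonFriedland}: your bipartite double cover $G'$ is exactly the bipartite graph whose biadjacency matrix is ${\rm Adj}(G)$, so $|{\mathcal M}_{\rm perf}(G')| = {\rm perm}({\rm Adj}(G))$, and your orientation-versus-alternation count of the factors of two is the paper's comparison of cycle covers with even cycle covers. The only difference is cosmetic (graph language in place of permanent language), and your bookkeeping of exactly which cycles contribute a factor of $2$ is, if anything, a little more careful than the paper's.
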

Notice that this result is also tight: for example, for each fixed $d$ and $n$ with $d|n$, it is achieved by the graph made up of the disjoint union of $n/d$ copies of $K_{d,d}$. Note also that there is no permanent version of this result.

Kahn and Lov\'asz did not publish their proof. Since they first discovered the theorem, it has been rediscovered/reproved a number of times: by Alon and Friedland \cite{AlonFriedland}, Cutler and Radcliffe \cite{CutlerRadcliffe}, Egorychev \cite{Egorychev} and Friedland \cite{Friedland}. Alon and Friedland's is a ``book'' proof, observing that the theorem is an easy consequence of Br\'egman's theorem. We present the details in Section \ref{subsec-AlonFriedland}.

\subsection{Radhakrishnan's proof of Br\'egman's theorem} \label{subsec-rads proof}

A perfect matching $M$ in $G$ may be encoded as a bijection $f:[n] \rightarrow [n]$ via $f(i)=j$ if and only if $v_iw_j \in M$. This is how we will view matchings from now on. Let $X$ be a random variable which represents the uniform selection of a matching $f$ from ${\mathcal M}_{\rm perf}(G)$, the set of all perfect matchings in $G$. By Maximality of the uniform, $H(X) = \log |{\mathcal M}_{\rm perf}(G)|$,
and so our goal is to prove
\begin{equation} \label{inq-target}
H(X) \leq \sum_{k=1}^n \frac{\log d_k!}{d_k}.
\end{equation}
We view $X$ as the random vector $(f(1), \ldots, f(n))$. By Subadditivity, $H(X) \leq \sum_{k=1}^n H(f(k))$.
Since there are at most $d_i$ possibilities for the value of $f(k)$, we have $H(f(k)) \leq \log d_k$
for all $k$, and so $H(X) \leq \sum_{k=1}^n \log d_k$.
This falls somewhat short of (\ref{inq-target}), since $(\log d_k!)/d_k \approx \log (d_k/e)$ by Stirling's approximation to the factorial function.

We might try to improve things by using the sharper Chain rule in place of Subadditivity:
$$
H(X) = \sum_{k=1}^n H(f(k)|f(1),\ldots,f(k-1)).
$$
Now instead of naively saying that there are $d_k$ possibilities for $f(k)$ for each $k$, we have a chance to take into account the fact that when it comes time to reveal $f(k)$, some of $v_k$'s neighbors may have already been used (as a match for $v_j$ for some $j < k$), and so there may be a reduced range of choices for $f(k)$; for example, we can say definitively that $H(f(n)|f(1),\ldots,f(n-1))=0$.

The problem with this approach is that in general we have no way of knowing (or controlling) how many neighbors of $k$ have been used at the moment when $f(k)$ is revealed. Radhakrishnan's idea to deal with this problem is to choose a {\em random} order in which to examine the vertices of ${\mathcal E}$ (rather than the deterministic order $v_1, \ldots, v_n$). There is a good chance that with a random order, we can say something precise about the average or expected number of neighbors of $k$ that have been used at the moment when $f(k)$ is revealed, and thereby put a better upper bound on the $H(f(k))$ term.

So, let $\tau = \tau_1\ldots \tau_n$ be a permutation of $[n]$ (which we will think of as acting on ${\mathcal E}$ in the natural way). We have
$$
H(X) = \sum_{k=1}^n H(f(\tau_k)|f(\tau_1),\ldots,f(\tau_{k-1})).
$$
It will prove convenient to re-write this as
\begin{equation} \label{eq-tauchainrule}
H(X) = \sum_{k=1}^n H(f(k)|(f(\tau_\ell):\ell < \tau^{-1}_k)),
\end{equation}
where $\tau^{-1}_k$ is the element of $[n]$ that $\tau$ maps to $k$. Averaging (\ref{eq-tauchainrule}) over all $\tau$ (and changing order of summation) we obtain
\begin{equation} \label{eq-averagechainrule}
H(X) = \sum_{k=1}^n \frac{1}{n!} \sum_{\tau \in S_n} H(f(\tau_k)|(f(\tau_\ell):\ell < \tau^{-1}_k)).
\end{equation}
From here on we fix $k$ and examine the summand in (\ref{eq-averagechainrule}) corresponding to $k$.

For fixed $\tau \in S_n$ and $f \in {\mathcal M}_{\rm perf}(G)$, let $N_k(\tau,f)$ denote the number of $i \in [n]$ such that $v_kw_i \in E(G)$, and $i \not \in \{f(\tau_1),\ldots,f(\tau_{k-1})\}$ (in other words, $N_k(\tau,f)$ is the number of possibilities that remain for $f(k)$ when $f(\tau_1),\ldots,f(\tau_{k-1})$ have all been revealed). Since $v_k$ must have a partner in a perfect matching, the range of possible values for $N_k(\tau,f)$ is from $1$ to $d_k$. By definition of conditional entropy, and Conditional maximality of the uniform, we have that for each fixed $\tau \in S_n$
\begin{eqnarray*}
H(f(\tau_k)|f(\tau_1),\ldots,f(\tau_{k-1})) & \leq & \sum_{i=1}^{d_k} \Pr(N_k(\tau,f)=i) \log i  \\
& = & \sum_{i=1}^{d_k} \log i \sum_{f \in {\mathcal M}_{\rm perf}(G)} \frac{|\{f \in {\mathcal M}_{\rm perf}(G):N_k(\tau,f)=i\}|}{|{\mathcal M}_{\rm perf}(G)|},
\end{eqnarray*}
in the first line above returning to viewing $f$ as a uniformly chosen element of ${\mathcal M}_{\rm perf}(G)$, and unpacking this probability in the second line.

An upper bound for the summand in (\ref{eq-averagechainrule}) corresponding to $k$ is now
\begin{equation} \label{eq-Rad-Breg}
\sum_{i=1}^{d_k} \log i \left(\frac{1}{n!|{\mathcal M}_{\rm perf}(G)|}\sum_{f \in {\mathcal M}_{\rm perf}(G)} \sum_{\tau \in S_n} |\{f \in {\mathcal M}_{\rm perf}(G):N_k(\tau,f)=i\}|\right).
\end{equation}
Here is where the power of averaging over all $\tau \in S_n$ comes in. For each fixed $f \in {\mathcal M}_{\rm perf}(G)$, as $\tau$ runs over $S_n$, $N_k(\tau,f)$ is equally likely to take on each of the values $1$ through $d_k$, since $N_k(\tau,f)$ depends only on the position of $f(k)$ in $\tau$, relative to the positions of the other indices $i$ such that $v_kw_i \in E(G)$ (if $f(k)$ is the earliest neighbor of $k$ used by $\tau$, which happens with probability $1/d_k$ for uniformly chosen $\tau \in S_n$, then $N_k(\tau,f)=d_k$; if it is the second earliest, which again happens with probability $1/d_k$, then $N_k(\tau,f)=d_k-1$, and so on). So the sum in (\ref{eq-Rad-Breg}) becomes
$$
\sum_{i=1}^{d_k} \log i \left(\frac{1}{n!|{\mathcal M}_{\rm perf}(G)|}\sum_{f \in {\mathcal M}_{\rm perf}(G)} \frac{n!}{d_k}\right) = \frac{\log d_k!}{d_k},
$$
and inserting into (\ref{eq-averagechainrule}) we obtain
$$
H(X) \leq \sum_{k=1}^n  \frac{\log d_k!}{d_k}
$$
as required.

\subsection{Alon and Friedland's proof of the Kahn-Lov\'asz theorem} \label{subsec-AlonFriedland}

Alon and Friedland's idea is to relate ${\mathcal M}_{\rm perf}(G)$ to the permanent of the adjacency matrix ${\rm Adj}(G)=(a_{ij})$ of $G$. This is the $2n$ by $2n$ matrix with
$$
a_{ij} = \left\{\begin{array}{ll}
1 & \mbox{if $v_iv_j \in E$} \\
0 & \mbox{otherwise.}
\end{array}\right.
$$

An element of ${\mathcal M}_{\rm perf}(G) \times {\mathcal M}_{\rm perf}(G)$ is a pair of perfect matchings. The union of these perfect matchings is a collection of isolated edges (the edges in common to both matchings), together with a collection of disjoint even cycles, that covers the vertex set of the graph. For each such subgraph of $G$ (call it an {\em even cycle cover}), to reconstruct the pair of matchings from which it arose we have to make an arbitrary choice for each even cycle, since there are two ways of writing an even cycle as an ordered union of matchings. It follows that
$$
|{\mathcal M}_{\rm perf}(G) \times {\mathcal M}_{\rm perf}(G)|=\sum_{S} 2^{c(S)}
$$
where the sum is over all even cycle covers $S$ of $G$ and $c(S)$ counts the number of even cycles in $S$.

On the other hand, any permutation $\sigma$ contributing to ${\rm perm}({\rm Adj}(G))$ breaks into disjoint cycles each of length at least $2$, with the property that for each such cycle $(v_{i_1}, \ldots, v_{i_k})$ we have $v_{i_1}v_{i_2}, v_{i_2}v_{i_3}, \ldots, v_{i_k}v_{i_1} \in E$. So such $\sigma$ is naturally associated with a collection of isolated edges (the cycles of length $2$), together with a collection of disjoint cycles (some possibly of odd length), that covers the vertex set of the graph. For each such subgraph of $G$ (call it a {\em cycle cover}), to reconstruct the $\sigma$ from which it arose we have to make an arbitrary choice for each cycle, since there are two ways of orienting it. It follows that
$$
{\rm perm}({\rm Adj}(G))=\sum_{S} 2^{c(S)}
$$
where the sum is over all cycle covers $S$ of $G$ and $c(S)$ counts the number of cycles in $S$.

It is clear that $|{\mathcal M}_{\rm perf}(G) \times {\mathcal M}_{\rm perf}(G)| \leq {\rm perm}({\rm Adj}(G))$ since there are at least as many $S$'s contributing to the second sum as the first, and the summands are identical for $S$'s contributing to both. Applying Br\'egman's theorem to the right-hand side, and taking square roots, we get
$$
|{\mathcal M}_{\rm perf}(G)| \leq \prod_{i=1}^{2n} \left(d_i!\right)^\frac{1}{2d_i}.
$$

\section{Counting proper colorings of a regular graph} \label{sec-counting colorings}

\subsection{Introduction to the problem} \label{subsec-homsintro}

A {\em proper $q$-coloring} (or just {\em $q$-coloring}) of $G$ is a function from the vertices of $G$ to $\{1, \ldots, q\}$ with the property that adjacent vertices have different images. We write $c_q(G)$ for the number of $q$-colorings of $G$.

The following is a natural extremal enumerative question: for a family ${\mathcal G}$ of graphs, which $G \in {\mathcal G}$ maximizes $c_q(G)$? For example, for the family of $n$-vertex, $m$-edge graphs this question was raised independently by Wilf \cite{BenderWilf,Wilf} (who encountered it in his study of the running time of a backtracking coloring algorithm) and Linial \cite{Linial} (who encountered the {\em minimization} question in his study of the complexity of determining whether a given function on the vertices of a graph is in fact a proper coloring).
Although it has only been answered completely in some very special cases many partial results have been obtained (see \cite{LohPikhurkoSudakov} for a good history of the problem).

The focus of this section is the family of $n$-vertex $d$-regular graphs with $d \geq 2$ (the case $d=1$ being trivial). In Section \ref{subsec-GT} we explain an entropy proof of Galvin and Tetali \cite{GalvinTetali-weighted} of the following.
\begin{thm} \label{thm-GTcols}
For $d \geq 2$, $n \geq d+1$ and $q \geq 2$, if $G$ is any $n$-vertex $d$-regular bipartite graph then
$$
c_q(G) \leq c_q(K_{d,d})^\frac{n}{2d}.
$$
\end{thm}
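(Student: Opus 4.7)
The strategy is to adapt Kahn's entropy argument for independent sets in regular bipartite graphs. Let $A \cup B$ be the bipartition of $G$, so $|A| = |B| = n/2$ by $d$-regularity, and let $X = (X_v : v \in V(G))$ be a uniformly random proper $q$-coloring of $G$. By Maximality of the uniform, $H(X) = \log c_q(G)$, so the target becomes
$$H(X) \leq \frac{n}{2d} \log c_q(K_{d,d}).$$

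First I would apply the Chain rule to write $H(X) = H(X_B) + H(X_A \mid X_B)$. Since $A$ is independent in $G$ and each $v \in A$ has $N(v) \subseteq B$, the colors $\{X_v : v \in A\}$ are conditionally independent given $X_B$, each $X_v$ depending only on $X_{N(v)}$, so $H(X_A \mid X_B) = \sum_{v \in A} H(X_v \mid X_{N(v)})$. Next I would apply Shearer's lemma to $X_B$ with covering family $\mathcal{F} = \{N(v) : v \in A\}$ of subsets of $B$; by $d$-regularity each $u \in B$ lies in exactly $d$ members of $\mathcal{F}$, so $t=d$ and
$$H(X_B) \leq \frac{1}{d} \sum_{v \in A} H(X_{N(v)}).$$
Combining yields $H(X) \leq \sum_{v \in A} \bigl( \tfrac{1}{d} H(X_{N(v)}) + H(X_v \mid X_{N(v)}) \bigr)$.

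The heart of the argument, and the main obstacle, is then a local comparison to $K_{d,d}$: for each $v \in A$,
$$\tfrac{1}{d} H(X_{N(v)}) + H(X_v \mid X_{N(v)}) \leq \tfrac{1}{d} \log c_q(K_{d,d}).$$
Since $|A| = n/2$, summing over $v \in A$ yields the theorem. To prove this local inequality, write $p(\sigma) = \Pr(X_{N(v)} = \sigma)$ for $\sigma \in [q]^d$, and let $S(\sigma) \subseteq [q]$ be the set of colors appearing in $\sigma$. Since $X_v$ must avoid $S(\sigma)$, Maximality of the uniform gives $H(X_v \mid X_{N(v)} = \sigma) \leq \log(q - |S(\sigma)|)$, so $H(X_v \mid X_{N(v)}) \leq \sum_\sigma p(\sigma) \log(q - |S(\sigma)|)$. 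A direct count shows
$$c_q(K_{d,d}) = \sum_{\sigma \in [q]^d} (q - |S(\sigma)|)^d,$$
since a coloring of $K_{d,d}$ is obtained by freely coloring one side by some $\sigma$ and independently giving each of the $d$ vertices on the other side any of the $q - |S(\sigma)|$ colors outside $S(\sigma)$. After multiplying the desired inequality by $d$, it becomes
$$\sum_\sigma p(\sigma) \log \frac{(q-|S(\sigma)|)^d}{p(\sigma)} \leq \log \sum_\sigma (q-|S(\sigma)|)^d,$$
which is exactly Jensen's inequality for the concave function $\log$ applied with weights $p(\sigma)$.

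In summary, the plan has three movements: (i) decompose $H(X)$ along the bipartition using the Chain rule and Shearer's lemma to reduce to a sum of local contributions indexed by $A$; (ii) identify the correct local bound, comparing each contribution to $(\log c_q(K_{d,d}))/d$; and (iii) verify the local bound via the explicit formula $c_q(K_{d,d}) = \sum_\sigma (q-|S(\sigma)|)^d$ and Jensen's inequality. Step (iii) is the delicate one; (i) and (ii) are structural and parallel Kahn's independent-set argument. The role of $K_{d,d}$ is transparent in retrospect: under a uniform coloring of $K_{d,d}$ the distribution of the colors of one side \emph{is} exactly proportional to $(q - |S(\sigma)|)^d$, which is precisely the case of equality in the Jensen step.
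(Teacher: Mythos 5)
Your proposal is correct and follows essentially the same route as the paper's proof: chain rule across the bipartition, Shearer's lemma with the covering family $\{N(v):v\in A\}$, and the local comparison $\tfrac1d H(X_{N(v)})+H(X_v\mid X_{N(v)})\le\tfrac1d\log c_q(K_{d,d})$ via the identity $c_q(K_{d,d})=\sum_\sigma(q-|S(\sigma)|)^d$ and Jensen. The only cosmetic difference is that you invoke exact conditional independence of $(X_v)_{v\in A}$ given $X_B$ (which is valid), where the paper only needs conditional subadditivity plus dropping conditioning to get the same inequality.
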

Notice that this upper bound is tight in the case when $2d|n$, being achieved by the disjoint union of $n/2d$ copies of $K_{d,d}$.

Theorem \ref{thm-GTcols} is a special case of a more general result concerning graph homomorphisms. A {\em homomorphism} from $G$ to a graph $H$ (which may have loops) is a map from vertices of $G$ to vertices of $H$ with adjacent vertices in $G$ being mapped to adjacent vertices in $H$. Homomorphisms generalize $q$-colorings (if $H=K_q$ then the set of homomorphisms to $H$ is in bijection with the set of $q$-colorings of $G$) as well as other graph theory notions, such as independent sets. A {\em independent set} in a graph is a set of pairwise non-adjacent vertices; notice that if $H=H_{\rm ind}$ is the graph on two adjacent vertices with a loop at exactly one of the vertices, then a homomorphism from $G$ to $H$ may be identified, via the preimage of the unlooped vertex, with an independent set in $G$. The main result from \cite{GalvinTetali-weighted} is the following generalization of Theorem \ref{thm-GTcols}. Here we write ${\rm hom}(G,H)$ for the number of homomorphisms from $G$ to $H$.
\begin{thm} \label{thm-GThoms}
For $d \geq 2$, $n \geq d+1$ and any finite graph $H$ (perhaps with loops, but without multiple edges), if $G$ is any $n$-vertex $d$-regular bipartite graph then
$$
{\rm hom}(G,H) \leq {\rm hom}(K_{d,d},H)^\frac{n}{2d}.
$$
\end{thm}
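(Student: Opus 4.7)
The plan is to pick $f$ uniformly from ${\rm hom}(G,H)$, so that $H(f)=\log{\rm hom}(G,H)$ by Maximality of the uniform, and to show $H(f)\le(n/(2d))\log{\rm hom}(K_{d,d},H)$. Write $V(G)={\mathcal E}\sqcup{\mathcal O}$ for the bipartition (both sides have size $n/2$ by $d$-regularity), and write $f_v$ and $f_S$ for the image of a vertex and the vector of images on a set $S$. By the Chain rule
$$
H(f) = H(f_{\mathcal O}) + H(f_{\mathcal E} | f_{\mathcal O}),
$$
and I would bound the two pieces asymmetrically. For the first, Shearer's lemma applied to the family $\{N(v):v\in{\mathcal E}\}$ of $d$-subsets of ${\mathcal O}$, which by $d$-regularity covers each element of ${\mathcal O}$ exactly $d$ times, gives $H(f_{\mathcal O})\le(1/d)\sum_{v\in{\mathcal E}}H(f_{N(v)})$. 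For the second, Conditional subadditivity followed by Dropping conditioning (discarding the coordinates of $f_{\mathcal O}$ outside $N(v)$, which is harmless because $v$ has no neighbors there) yields $H(f_{\mathcal E}|f_{\mathcal O})\le\sum_{v\in{\mathcal E}}H(f_v | f_{N(v)})$. So it suffices to prove, for each $v\in{\mathcal E}$, the \emph{local} inequality
$$
\frac{1}{d}H(f_{N(v)})+H(f_v|f_{N(v)})\le\frac{1}{d}\log{\rm hom}(K_{d,d},H);
$$
summing over the $n/2$ vertices of ${\mathcal E}$ then gives the theorem.

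For the local step, let $p(\mathbf{a})=\Pr(f_{N(v)}=\mathbf{a})$ for $\mathbf{a}\in V(H)^d$, and let ${\mathcal N}_H(\mathbf{a})=\bigcap_{i=1}^d N_H(a_i)$ be the common neighborhood in $H$ (with loops allowed). Conditional maximality of the uniform gives $H(f_v|f_{N(v)}=\mathbf{a})\le\log|{\mathcal N}_H(\mathbf{a})|$, so the left-hand side above is at most
$$
\frac{1}{d}\sum_{\mathbf{a}}p(\mathbf{a})\log\frac{|{\mathcal N}_H(\mathbf{a})|^d}{p(\mathbf{a})}\le\frac{1}{d}\log\sum_{\mathbf{a}}|{\mathcal N}_H(\mathbf{a})|^d
$$
by Jensen applied to the concave function $\log$. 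The inner sum equals ${\rm hom}(K_{d,d},H)$, on enumerating homomorphisms by first choosing an image $\mathbf{a}\in V(H)^d$ for one side and then, independently, an element of ${\mathcal N}_H(\mathbf{a})$ for each of the $d$ vertices on the other side.

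The main obstacle is identifying exactly this asymmetric split. The factor $1/d$ supplied by Shearer is what, after being fed through the Jensen step, produces precisely the exponent $d$ on $|{\mathcal N}_H(\mathbf{a})|$, matching the counting formula for ${\rm hom}(K_{d,d},H)$; any other covering of ${\mathcal O}$, or any other partition of $V(G)$, would land the bound on ${\rm hom}(K_{k,d},H)$ for some $k\ne d$ and lose the sharp constant. Once this split is fixed, every step is a direct application of entropy properties already established in Section \ref{sec-basics of entropy}.
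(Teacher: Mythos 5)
Your proof is correct and follows essentially the same route as the paper's: the Chain rule split across the bipartition, Shearer's lemma with the neighborhood family $\{N(v)\}$ on one side, Conditional subadditivity plus Dropping conditioning on the other, and then the local Jensen step identifying $\sum_{\mathbf{a}}|{\mathcal N}_H(\mathbf{a})|^d$ with ${\rm hom}(K_{d,d},H)$. The only cosmetic difference is that you condition on ${\mathcal O}$ and localize at vertices of ${\mathcal E}$ rather than the reverse, which changes nothing since both classes have size $n/2$.
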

The proof of Theorem \ref{thm-GThoms} is virtually identical to that of Theorem \ref{thm-GTcols}; to maintain the clarity of the exposition we just present in the special case of coloring. (Recently Lubetzky and Zhao \cite{LubetskyZhao} gave a proof of Theorem \ref{thm-GThoms} that uses a generalized H\"{o}lder's inequality in place of entropy.)

The inspiration for Theorems \ref{thm-GTcols} and \ref{thm-GThoms} was the special case of enumerating independent sets ($H=H_{\rm ind})$. In what follows we use $i(G)$ to denote the number of independent sets in $G$. Alon \cite{Alon} conjectured that for all $n$-vertex $d$-regular $G$,
$$
i(G) \leq i(K_{d,d})^{n/2d} = (2^{d+1}-1)^{n/2d} = 2^{n/2 + n(1+o(1))/2d}
$$
(where here and in the rest of this section $o(1) \rightarrow 0$ as $d \rightarrow \infty$),
and proved the weaker bound $i(G) \leq 2^{n/2 + Cn/d^{1/10}}$ for some absolute constant $C>0$.

The sharp bound was proved for {\em bipartite} $G$ by Kahn \cite{Kahn}, but it was a while before a bound for general $G$ was obtained that came close to $i(K_{d,d})^{n/2d}$ in the second term of the exponent; this was Kahn's (unpublished) bound $i(G)\leq 2^{n/2 + n(1+o(1))/d}$.
This was improved to $i(G) \leq 2^{n/2 + n(1+o(1))/2d}$ by Galvin
\cite{Galvin-asymptot-ind-count}. Finally Zhao \cite{Zhao} deduced the exact bound for general $G$ from the bipartite case.

A natural question to ask is what happens to the bounds on number of $q$-colorings and homomorphism counts, when we relax to the family of general (not necessarily bipartite) $n$-vertex, $d$-regular graphs; this question remains mostly open, and is discussed in Section \ref{subsec-homs problems}. We will mention one positive result here. By a modification of the proof of Theorems \ref{thm-GTcols} and \ref{thm-GThoms} observed by Kahn, the following can be obtained. (See \cite{MadimanTetali} for a more general statement.)
\begin{thm} \label{thm-non-bip-homs}
Fix $d \geq 2$, $n \geq d+1$ and any finite graph $H$ (perhaps with loops, but without multiple edges), and let $G$ be an $n$-vertex $d$-regular graph (not necessarily bipartite). Let $<$ be a total order on the vertices of $G$, and write $p(v)$ for the number of neighbors $w$ of $v$ with $w < v$. Then
$$
{\rm hom}(G,H) \leq \prod_{v \in V(G)}{\rm hom}(K_{p(v),p(v)},H)^\frac{1}{d}.
$$
In particular,
$$
c_q(G) \leq \prod_{v \in V(G)} c_q(K_{p(v),p(v)})^\frac{1}{d}.
$$
\end{thm}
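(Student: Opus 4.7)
The plan is to mimic the entropy proof of Theorem~\ref{thm-GThoms} from the bipartite case, with the total order $<$ playing the role of the bipartition. Let $X = (X_v)_{v \in V(G)}$ be a uniformly random homomorphism $G \to H$, so by Maximality of the uniform $H(X) = \log {\rm hom}(G,H)$, and the theorem reduces to showing
$$d \cdot H(X) \;\leq\; \sum_{v \in V(G)} \log {\rm hom}(K_{p(v),p(v)}, H).$$
Write $L(v) = \{u : u < v\}$ and $U_v = N(v) \cap L(v)$, so $|U_v| = p(v)$. I will sandwich both sides of this inequality against the common quantity $\sum_v \bigl[H(X_{U_v}) + p(v) H(X_v \mid X_{U_v})\bigr]$, bounding the right-hand side from above and $d\cdot H(X)$ from below.

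The upper bound is the ``iid extension'' trick already used in the bipartite proof of Theorem~\ref{thm-GThoms}. For each $v$, the marginal $(X_{U_v}, X_v)$ is a random homomorphism from $K_{1,p(v)}$ to $H$ (with $X_v$ playing the center); forming $p(v)$ conditionally iid copies $X_v^{(1)}, \ldots, X_v^{(p(v))}$ of $X_v$ given $X_{U_v}$ produces a random variable supported on ${\rm hom}(K_{p(v),p(v)}, H)$. The Chain rule (using conditional independence) combined with Maximality of the uniform then gives
$$H(X_{U_v}) + p(v)\, H(X_v \mid X_{U_v}) \;\leq\; \log {\rm hom}(K_{p(v),p(v)}, H),$$
and summing over $v$ is exactly the desired upper bound.

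The main conceptual step --- and what I expect to be the trickiest part --- is the matching lower bound
$$\sum_v \bigl[H(X_{U_v}) + p(v)\, H(X_v \mid X_{U_v})\bigr] \;\geq\; d \cdot H(X). \qquad (\ast)$$
The plan for $(\ast)$ is to start from the Chain rule along $<$, namely $H(X) = \sum_v H(X_v \mid X_{L(v)})$, and apply Dropping conditioning in two places. Since $U_v \subseteq L(v)$, Dropping conditioning gives $H(X_v \mid X_{U_v}) \geq H(X_v \mid X_{L(v)})$, which handles the $p(v)$-weighted summand. For the $H(X_{U_v})$ summand, expanding by the Chain rule along $<$ restricted to $U_v$ writes $H(X_{U_v}) = \sum_{u \in U_v} H(X_u \mid X_{U_v \cap L(u)})$; since $U_v \cap L(u) \subseteq L(u)$, Dropping conditioning again bounds each term from below by $H(X_u \mid X_{L(u)})$, so $H(X_{U_v}) \geq \sum_{u \in U_v} H(X_u \mid X_{L(u)})$. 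Interchanging the order of summation and noting that each $u$ lies in $U_v$ exactly for its $d - p(u)$ later neighbors,
$$\sum_v H(X_{U_v}) \;\geq\; \sum_u (d - p(u))\, H(X_u \mid X_{L(u)}).$$
Adding the two inequalities and using $(d-p(v)) + p(v) = d$ collapses the right-hand side into $d \sum_v H(X_v \mid X_{L(v)}) = d H(X)$, proving $(\ast)$. Combining with the upper bound yields the theorem, and the $c_q(G)$ statement follows by specializing $H = K_q$.
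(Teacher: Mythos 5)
Your proof is correct and is essentially the paper's argument: your inequality $(\ast)$ is exactly Kahn's Second conditional Shearer's lemma (Lemma \ref{lemma-conditional Shearer 2}) applied to the family consisting of one copy of $U_v$ and $p(v)$ copies of $\{v\}$ for each $v$ (each vertex covered exactly $d$ times), with the lemma's chain-rule-plus-dropping-conditioning proof written out inline. The local bound $H(X_{U_v}) + p(v)H(X_v \mid X_{U_v}) \leq \log {\rm hom}(K_{p(v),p(v)},H)$ via conditionally iid copies is an equivalent rendering of the step the paper performs with Jensen's inequality in Section \ref{subsec-GT}.
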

Notice that Theorem \ref{thm-non-bip-homs} implies Theorems \ref{thm-GTcols} and \ref{thm-GThoms}. Indeed, if $G$ is bipartite with color classes ${\mathcal E}$ and ${\mathcal O}$, and we take $<$ to be an order that puts everything in ${\mathcal E}$ before everything in ${\mathcal O}$, then $p(v)=0$ for each $v \in {\mathcal E}$ (and so the contribution to the product from these vertices is $1$) and $p(v)=d$ for each $v \in {\mathcal O}$ (and so the contribution to the product from each of these vertices is ${\rm hom}(K_{d,d},H)^{1/d}$). Noting that $|{\mathcal O}|=n/2$, the right hand side of the first inequality in Theorem \ref{thm-non-bip-homs} becomes ${\rm hom}(K_{d,d},H)^{n/2d}$.

We prove Theorem \ref{thm-non-bip-homs} in Section \ref{subsec-nonbip}. To appreciate the degree to which the bound differs from those of Theorems \ref{thm-GTcols} and \ref{thm-GThoms} requires understanding ${\rm hom}(K_{p(v),p(v)},H)$, which would be an overly long detour, so we content ourselves now with understanding the case $H=K_3$ (proper $3$-colorings). An easy inclusion-exclusion argument gives
\begin{equation} \label{eq-bip-colo-count}
c_q(K_{d,d})^{\frac{n}{2d}} = \left(6(2^d -1)\right)^\frac{n}{2d} = 2^\frac{n}{2}6^{\frac{n}{d}\left(\frac{1}{2}+o(1)\right)}.
\end{equation}
Theorem \ref{thm-non-bip-homs}, on the other hand,  says that for all $n$-vertex, $d$-regular $G$,
\begin{eqnarray}
c_3(G) & \leq & \prod_{v \in V(G)} c_3(K_{p(v),p(v)})^\frac{1}{d} \nonumber \\
& \leq & \prod_{v \in V(G)} \left(6 \times 2^{p(v)}\right)^\frac{1}{d} \nonumber \\
& = & 2^\frac{\sum_{v} p(v)}{d} 6^\frac{n}{d} \nonumber \\
& = & 2^\frac{n}{2} 6^\frac{n}{d}, \label{eq-non-bip-col-count}
\end{eqnarray}
the last equality use the fact that each edge in $G$ is counted exactly once in $\sum_{v} p(v)$. Comparing (\ref{eq-non-bip-col-count}) with (\ref{eq-bip-colo-count}) we see that in the case of proper 3-coloring, the cost that Theorem \ref{thm-non-bip-homs} pays in going from bipartite $G$ to general $G$ is to give up a constant fraction in the second term in the exponent of $c_3(G)$; there is a similar calculation that can be performed for other $H$.

The bound in (\ref{eq-non-bip-col-count}) has recently been improved \cite{Galvin-asymptot-col-count}: we now know that for all $n$-vertex, $d$-regular $G$,
\begin{equation} \label{eq-bip-colo-count2}
c_3(G) \leq 2^\frac{n}{2} 6^{\frac{n}{2d}\left(\frac{1}{2}+o(1)\right)}.
\end{equation}
(This still falls short of (\ref{eq-bip-colo-count}), since there the $o(1)$ term is negative, whereas in (\ref{eq-bip-colo-count2}) it is positive). The proof does not use entropy, and we don't discuss it any further here.

\subsection{A tight bound in the bipartite case} \label{subsec-GT}

Here we prove Theorem \ref{thm-GTcols}, following an approach first used by Kahn \cite{Kahn} to enumerate independent sets. Let $G$ be a $n$-vertex, $d$-regular bipartite graph, with color classes ${\mathcal E}$ and ${\mathcal O}$ (both of size $n/2$). Let $X$ be a uniformly chosen proper $q$-coloring of $G$. We may view $X$ as a vector $(X_{\mathcal E},X_{\mathcal O})$ (with $X_{\mathcal E}$, for example, being the restriction of the coloring to ${\mathcal E}$). By the Chain rule,
\begin{equation} \label{eq-homs1}
H(X) = H(X_{\mathcal E}) + H(X_{\mathcal O}|X_{\mathcal E}).
\end{equation}
Each of $X_{\mathcal E}, X_{\mathcal O}$ may themselves be viewed as vectors, $(X_v:v \in {\mathcal E})$ and $(X_v:v \in {\mathcal O})$ respectively, where $X_v$ indicates the restriction of the random coloring to vertex $v$. We bound $H(X_{\mathcal O}|X_{\mathcal E})$ by Conditional subadditivity, with (\ref{eq-homs2}) following from Dropping conditioning:
\begin{eqnarray}
H(X_{\mathcal O}|X_{\mathcal E}) & \leq & \sum_{v \in {\mathcal O}} H(X_v|X_{\mathcal E}) \nonumber \\
& \leq & \sum_{v \in {\mathcal O}} H(X_v|X_{N(v)}), \label{eq-homs2}
\end{eqnarray}
where $N(v)$ denotes the neighborhood of $v$.

We bound $H(X_{\mathcal E})$ using Shearer's lemma, taking ${\mathcal F} = \{N(v): v \in {\mathcal O}\}$. Since $G$ is $d$-regular, each $v \in E$ appears in exactly $d$ elements of ${\mathcal F}$, and so
\begin{equation} \label{eq-homs3}
H(X_{\mathcal E}) \leq \frac{1}{d} \sum_{v \in {\mathcal O}} H(X_{N(v)}).
\end{equation}
Combining (\ref{eq-homs3}) and (\ref{eq-homs2}) with (\ref{eq-homs1}) yields
\begin{equation} \label{eq-homs4}
H(X) \leq \frac{1}{d} \sum_{v \in {\mathcal O}} \left(H(X_{N(v)}) + dH(X_v|X_{N(v)})\right).
\end{equation}
Notice that the left-hand side of (\ref{eq-homs4}) concerns a global quantity (the entropy of the coloring of the whole graph), but the right-hand side concerns a local quantity (the coloring in the neighborhood of a single vertex).

We now focus on the summand on the right-hand side of (\ref{eq-homs4}) for a particular $v \in {\mathcal O}$. For each possible assignment $C$ of colors to $N(v)$, write $p(C)$ for the probability of the event $\{X_{N(v)}=C\}$. By the definition of entropy (and conditional entropy),
\begin{eqnarray}
H(X_{N(v)}) + dH(X_v|X_{N(v)}) & = & \sum_C p(C)\log \frac{1}{p(C)} + d\sum_C p(C) H(X_v|\{X_{N(v)}=C\}) \nonumber \\
& = & \sum_C p(C)\left(\log \frac{1}{p(C)} + dH(X_v|\{X_{N(v)}=C\})\right). \label{eq-homs5}
\end{eqnarray}
Let $e(C)$ be the number of ways of properly coloring $v$, given that $N(v)$ is colored $C$. Using Conditional maximality of the uniform we have
$$
H(X_v|\{X_{N(v)}=C\}) \leq \log e(C),
$$
and so from (\ref{eq-homs5}) we get
\begin{eqnarray}
H(X_{N(v)}) + dH(X_v|X_{N(v)}) & \leq & \sum_C p(C)\left(\log \frac{1}{p(C)} + d \log e(C)\right) \nonumber \\
& = & \sum_C p(C) \log \left(\frac{e(C)^d}{p(C)}\right) \nonumber \\
& \leq & \log \left(\sum_C e(C)^d\right), \label{eq-homs6}
\end{eqnarray}
with (\ref{eq-homs6}) an application of Jensen's inequality.

But now notice that the right-hand side of (\ref{eq-homs6}) is exactly $c_q(K_{d,d})$: to properly $q$-color $K_{d,d}$ we first choose an assignment of colors $C$ to one of the two color classes, and then for each vertex of the other class (independently) choose a color from $e(C)$. Combining this observation with (\ref{eq-homs4}), we get
\begin{eqnarray}
H(X) & \leq & \frac{1}{d} \sum_{v \in {\mathcal O}} \log c_q(K_{d,d}) \nonumber \\
& = & \log c_q(K_{d,d})^{n/2d}. \label{eq-homs7}
\end{eqnarray}
Finally, observing that since $X$ is a uniform $q$-coloring of $G$, and so $H(X) = \log c_q(G)$, we get from (\ref{eq-homs7}) that
$$
c_q(G) \leq c_q(K_{d,d})^{n/2d}.
$$

\subsection{A weaker bound in the general case} \label{subsec-nonbip}

We just prove the bound on $c_q(G)$, with the proof of the more general statement being almost identical.  As before, $X$ is a uniformly chosen proper $q$-coloring of $G$, which we view as the vector $(X_v:v \in V(G))$.

Let $<$ be any total order on $V(G)$, and write $P(v)$ for the set of neighbors $w$ of $v$ with $w < v$ (so $|P(v)|=p(v)$). Let ${\mathcal F}$ be the family of subsets of $V(G)$ that consists of one copy of $P(v)$ for each $v \in V(G)$, and $p(v)$ copies of $\{v\}$. Notice that each $v \in V(G)$ appears in exactly $d$ members of ${\mathcal F}$.

We bound $H(X_v:v \in V(G))$ using Kahn's conditional form of Shearer's lemma (Lemma \ref{lemma-conditional Shearer 2}). We have
\begin{eqnarray}
H(X_v:v \in V(G)) & \leq & \frac{1}{d} \sum_{F \in {\mathcal F}} H(X_F|\{X_i:i \prec F\}) \nonumber \\
& = & \frac{1}{d} \sum_{v \in V(G)} \left(H(X_{P(v)}|\{X_i:i < P(v)\}) + p(v)H(X_v|\{X_i:i < v\})\right) \nonumber \\
& \leq & \frac{1}{d} \sum_{v \in V(G)} \left(H(X_{P(v)}) + p(v)H(X_v|X_{P(v)})\right). \label{eq-homsnonbib1}
\end{eqnarray}
In (\ref{eq-homsnonbib1}) we have used Dropping conditioning on both entropy terms on the right-hand side. That
$$
H(X_{P(v)}) + p(v)H(X_v|X_{P(v)}) \leq c_q(K_{p(v),p(v)})
$$
(completing the proof) follows exactly as in the bipartite case (Section \ref{subsec-GT}) (just replace all ``$N$'''s there with ``$P$'''s).

\section{Open problems} \label{sec-open problems}

\subsection{Counting matchings} \label{subsec-matching open}

A natural direction in which to extend Br\'egman's theorem is to consider arbitrary matchings in $G$, rather than perfect matchings. For this discussion, we focus exclusively on the case of $d$-regular $G$ on $2n$ vertices, with $d|2n$. Writing $K(n,d)$ for the disjoint union of $n/d$ copies of $K_{d,d}$, we can restate Br\'egman's theorem as the statement that
\begin{equation} \label{eq-Bregman}
|{\mathcal M}_{\rm perf}(G)| \leq d!^\frac{n}{d} = |{\mathcal M}_{\rm perf}(K(n,d))|
\end{equation}
for bipartite $d$-regular $G$ on $2n$ vertices, and the Kahn-Lov\'asz theorem as the statement that (\ref{eq-Bregman}) holds for arbitrary $d$-regular $G$ on $2n$ vertices.

Do these inequalities continue to hold if we replace ${\mathcal M}_{\rm perf}(G)$ with ${\mathcal M}(G)$, the collection of all matchings (not necessarily perfect) in $G$?
\begin{conj} \label{conj-allmatch}
For bipartite $d$-regular $G$ on $2n$ vertices (or for arbitrary $d$-regular $G$ on $2n$ vertices),
$$
|{\mathcal M}(G)| \leq |{\mathcal M}(K(n,d))|.
$$
\end{conj}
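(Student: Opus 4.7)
The plan is to mimic the Galvin--Tetali entropy argument of Theorem~\ref{thm-GTcols}, together with Radhakrishnan's proof of Br\'egman's theorem (Section~\ref{sec-bregman}), and I would address the bipartite case first. Let $G$ be a $d$-regular bipartite graph with color classes ${\mathcal E}$ and ${\mathcal O}$, each of size~$n$. I would encode each matching $M \in {\mathcal M}(G)$ by the partial partner function $f_M : {\mathcal E} \to {\mathcal O} \cup \{*\}$, where $f_M(u) = w$ when $\{u,w\} \in M$ and $f_M(u) = *$ when $u$ is $M$-unsaturated. Taking $X = (X_u : u \in {\mathcal E})$ to be the encoding of a uniformly chosen $M$, Maximality of the uniform gives $H(X) = \log|{\mathcal M}(G)|$, so the goal is
\[
H(X) \leq \frac{n}{d}\,\log m_d, \qquad m_d := |{\mathcal M}(K_{d,d})|.
\]

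The first step would be to apply Shearer's lemma to $X$ with the family ${\mathcal F} = \{N(w) : w \in {\mathcal O}\}$, in which each $u \in {\mathcal E}$ lies in exactly $d$ members, yielding
\[
H(X) \leq \frac{1}{d}\sum_{w \in {\mathcal O}} H(X_{N(w)}).
\]
It would then suffice to show $H(X_{N(w)}) \leq \log m_d$ for every $w \in {\mathcal O}$, and on $K_{d,d}$ itself this holds with equality by Maximality of the uniform. The hard part is that this per-vertex target fails outside of $K_{d,d}$: on the bipartite $6$-cycle ($d=2$), the marginal $X_{N(w)}$ induced by the uniform random matching is supported on $8$ configurations rather than $7 = m_2$, and a direct calculation gives its entropy strictly greater than $\log m_2$. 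Thus Shearer combined with Maximality of the uniform cannot by itself yield the conjectured bound even in this small case, and this is the main obstacle.

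The natural attempt to close the gap is to mimic the convexity step of Galvin and Tetali (Section~\ref{subsec-GT}), decomposing $H(X_{N(w)})$ via the chain rule and repackaging the local probabilities into an inequality of the form $\sum_C p(C)\log(e(C)^\alpha/p(C)) \leq \log m_d$ for an appropriate external configuration $C$, extension count $e(C)$, and exponent $\alpha$. The key distinction from the coloring setting, and the core difficulty, is that writing $X_w$ for the partner of $w$, the value $X_w$ is completely determined by $X_{N(w)}$ (namely $X_w = u$ iff some $u \in N(w)$ has $X_u = w$, otherwise $X_w = *$); hence $H(X_w \mid X_{N(w)}) = 0$ and the Galvin--Tetali decomposition $H(X_{N(w)}) + d\,H(X_w \mid X_{N(w)})$ collapses, providing no extra slack. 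Identifying a genuinely new product-form identity for $m_d$ that separately accommodates the ``matched'' and ``unmatched'' state of each vertex---perhaps via the conditional Shearer lemma (Lemma~\ref{lemma-conditional Shearer 2}) applied to a well-chosen partial order on ${\mathcal E}$---is where I expect the real work to lie.

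For the non-bipartite case I would follow Alon and Friedland's cycle-cover trick of Section~\ref{subsec-AlonFriedland}, expressing $|{\mathcal M}(G)|^2$ as a sum over subgraphs of $G$ whose connected components are isolated edges, paths, and cycles (the possible components of a union of two matchings), relating this to a permanent of a suitable augmented adjacency matrix, and applying the bipartite bound to a bipartite double of $G$ before taking square roots. The added complication over Kahn--Lov\'asz is the presence of paths in the symmetric difference of two arbitrary matchings (these do not arise in the perfect-matching case), and controlling their contributions tightly is where I would expect the second substantial obstacle. Given that the conjecture has resisted the standard entropy toolkit for many years, I would not be surprised if completing either stage ultimately requires ideas outside these notes, such as the cluster-expansion or occupancy method from statistical physics.
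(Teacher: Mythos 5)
This statement is Conjecture \ref{conj-allmatch}, an \emph{open problem}: the paper offers no proof of it, only the remark that the methods of Alon and Friedland from Section \ref{subsec-AlonFriedland} can be modified to reduce the general case to the bipartite case. So there is no proof in the paper to compare yours against, and you have not produced one either --- which you candidly acknowledge. What you have written is an accurate diagnosis of why the obvious entropy attack fails, and it is worth recording that your specific claims check out. On the bipartite $6$-cycle the marginal $X_{N(w)}$ of a uniform random matching has distribution $(3,3,3,2,2,2,2,1)/18$ on its $8$ attainable configurations, giving entropy $\approx 2.93 > \log 7 = \log|{\mathcal M}(K_{2,2})|$, so the per-vertex bound $H(X_{N(w)}) \leq \log m_d$ needed to conclude directly from Shearer's lemma genuinely fails (though the conjecture itself survives there: $\log 18 \approx 4.17 \leq \tfrac{3}{2}\log 7 \approx 4.21$). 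Your second observation --- that in the partner-function encoding $X_w$ is determined by $X_{N(w)}$, so the term $d\,H(X_v|X_{N(v)})$ that supplies the crucial slack in the Galvin--Tetali argument of Section \ref{subsec-GT} vanishes identically --- is exactly the structural reason the coloring/homomorphism proof does not transfer: a matching constraint is not a ``local consistency'' constraint of the homomorphism type, since whether $u$ may take the value $w$ depends on the values of the \emph{other} neighbors of $w$, not just on $w$ itself. Your proposed reduction of the non-bipartite case to the bipartite case via the cycle-cover trick agrees with the paper's own (unproved) remark following the conjecture.

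One small caution: in your test case $C_6$ the comparison graph $K(n,d)$ is not actually defined (it requires $d\,|\,n$, and here $n=3$, $d=2$), so strictly speaking $C_6$ illustrates only the failure of the local entropy bound, not a case of the conjecture itself; you may want a divisible example such as a $d$-regular bipartite graph on $2n$ vertices with $d\,|\,n$ if you pursue this further. Since the proposal does not claim to establish the conjecture and correctly identifies it as beyond the reach of the techniques in these notes, the appropriate verdict is not that your argument has a hidden flaw, but that the problem remains open and your account of the obstacles is sound.
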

Here
the heart of the matter is the bipartite case: the methods of Alon and Friedland discussed in Section \ref{subsec-AlonFriedland} can be modified to show that the bipartite case implies the general case.

Friedland, Krop and Markstr\"om \cite{FriedlandKropMarkstrom} have proposed an even stronger conjecture, the Upper Matching conjecture. For each $0 \leq t \leq n$, write ${\mathcal M}_t(G)$ for the number of matchings in $G$ of size $t$ (that is, with $t$ edges).
\begin{conj} \label{conj-upmatch} (Upper Matching conjecture)
For bipartite $d$-regular $G$ on $2n$ vertices (or for arbitrary $d$-regular $G$ on $2n$ vertices), and for all $0 \leq t \leq n$,
$$
|{\mathcal M}_t(G)| \leq |{\mathcal M}_t(K(n,d))|.
$$
\end{conj}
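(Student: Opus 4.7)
The plan is to attack the bipartite version first, since in spirit (and in analogy with the Alon--Friedland reduction recalled in Section \ref{subsec-AlonFriedland} for the perfect-matching statement) one expects a bipartite bound to transfer to the general $d$-regular setting by relating pairs of size-$t$ matchings in $G$ to a suitable modification of the permanent of ${\rm Adj}(G)$, though this transfer itself already requires a nontrivial bookkeeping argument and is not obviously free. So henceforth let $G$ be bipartite $d$-regular with parts $\mathcal E$ and $\mathcal O$ of size $n$ each, and let $X$ be uniform on ${\mathcal M}_t(G)$, encoded as a partial injective function $f:\mathcal E \to \mathcal O \cup \{\star\}$ with exactly $t$ non-$\star$ values.

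The guiding idea is to extend Radhakrishnan's proof of Br\'egman's theorem (Section \ref{subsec-rads proof}) from perfect matchings to size-$t$ matchings. Choose a uniformly random order $\tau$ on $\mathcal E$, and, mirroring (\ref{eq-averagechainrule}), write
\begin{equation*}
\log|{\mathcal M}_t(G)| \;=\; H(X) \;=\; \sum_{k=1}^n \frac{1}{n!}\sum_{\tau \in S_n} H\bigl(f(\tau_k)\,\big|\,(f(\tau_\ell): \ell < \tau^{-1}_k)\bigr).
\end{equation*}
For fixed $f$ and $\tau$, let $N_k(\tau,f)$ be the number of neighbors of $v_k$ not already used by $f(\tau_1),\ldots,f(\tau_{k-1})$. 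If $v_k$ is matched by $f$, the number of possibilities for $f(\tau_k)$ is $N_k(\tau,f) \in \{1,\ldots,d\}$, and Radhakrishnan's symmetry shows that as $\tau$ varies this is uniformly distributed on $\{1,\ldots,d\}$, contributing $\log d!/d$ per matched vertex. If $v_k$ is unmatched, the number of possibilities is $N_k(\tau,f)+1$ (allowing the $\star$ outcome). Splitting the inner sum by these two cases and applying Conditional maximality of the uniform would, one hopes, produce an upper bound matching $\log |{\mathcal M}_t(K(n,d))|$ after a routine identification of the latter with a sum of $(d!/j!)$-style terms.

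The main obstacle is precisely the second case: when $v_k$ is unmatched, Radhakrishnan's symmetry breaks down, since $N_k(\tau,f)$ now depends on the entire matched/unmatched pattern of $f$ rather than merely on the relative position of a single privileged neighbor in $\tau$. A natural remedy is to decompose the entropy using an auxiliary random variable: write $X=(S,M)$ with $S \subseteq \mathcal E$ the set of $\mathcal E$-vertices saturated by $X$ and $M$ the matching given $S$, and use the Chain rule $H(X)=H(S)+H(M\mid S)$. One would then try to bound $H(M \mid S)$ by the expected value (over $S$) of a Br\'egman-type quantity applied to the induced subgraph $G[S \cup M(S)]$, and bound $H(S)$ by a binomial-style estimate as in Section \ref{subsec-binomial}. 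The difficulty is that the relevant induced subgraphs are not regular, so Br\'egman's bound becomes a product of $(\deg(v)!)^{1/\deg(v)}$ terms over random degrees, and one then needs a convexity/rearrangement argument showing that this expectation is maximized precisely when $G=K(n,d)$ (where the induced subgraph on a saturated set is structurally the cleanest).

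A complementary route worth pursuing is the line-graph translation: size-$t$ matchings in $G$ are size-$t$ independent sets in $L(G)$, which is $(2d-2)$-regular. A size-specific analogue of the Kahn--Galvin--Tetali bound from Section \ref{sec-counting colorings}, proved by applying Lemma \ref{lemma-Shearer} with a covering family tailored to independent sets of fixed size, would imply Conjecture \ref{conj-upmatch}; this however requires tracking the size parameter through a Shearer argument, which is itself an open problem (it is the refinement of Alon's conjecture that remains unresolved even for bipartite hosts). My expectation is that the cleanest conceptual progress will come from the first route: either a genuine symmetrization trick that salvages Radhakrishnan's argument in the unmatched case, or a sharp irregular-Br\'egman bound combined with a convexity lemma identifying $K(n,d)$ as the extremizer.
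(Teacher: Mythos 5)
The statement you are addressing is Conjecture \ref{conj-upmatch}, which is presented in the paper as an \emph{open problem}: it is known only for $t\leq 4$ and $t=n$ in the bipartite case \cite{FriedlandKropMarkstrom,Bregman}, with asymptotic evidence from \cite{CarrollGalvinTetali,KahnIlinca}. Your submission is accordingly not a proof but a survey of strategies, and each strategy stalls at exactly the point where the real difficulty lives. Most importantly, the one step you present as settled --- that a matched vertex contributes $\log d!/d$ by ``Radhakrishnan's symmetry'' --- is false. In the perfect-matching argument of Section \ref{subsec-rads proof}, every neighbor $w_i$ of $v_k$ is saturated, so each is consumed precisely when its partner $f^{-1}(i)$ appears before $v_k$ in $\tau$, and the uniformity of $N_k(\tau,f)$ on $\{1,\ldots,d_k\}$ follows from permuting $v_k$ among those $d_k$ partners. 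For a partial matching, a neighbor of $v_k$ that is unsaturated by $f$ is \emph{never} consumed, so if $v_k$ has $u_k$ unsaturated neighbors then $N_k(\tau,f)$ is uniform on $\{u_k+1,\ldots,d\}$ and the contribution is $\log(d!/u_k!)/(d-u_k)$, which is strictly larger than $\log d!/d$ whenever $u_k>0$. Controlling this excess (together with the unmatched-vertex case you correctly flag as broken) is precisely the content of the Kahn--Ilinca refinement, and even there the outcome is only an asymptotic match of the first two terms of $\log|{\mathcal M}_{\alpha n}(K(n,d))|$, not the exact inequality.

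Your two fallback routes also have identifiable defects beyond being unexecuted. The decomposition $H(X)=H(S)+H(M\mid S)$ requires a ``convexity/rearrangement argument'' identifying $K(n,d)$ as the extremizer of an expected irregular-Br\'egman product over the random saturated set $S$; no such lemma is stated or proved, and since the conjectured extremal count $|{\mathcal M}_t(K(n,d))|$ is a sum over compositions $(a_1,\ldots,a_{n/d})$ of $t$ rather than a single product, it is not even clear what the per-$S$ target quantity should be. The line-graph route cannot yield the conjecture even in principle: matchings of size $t$ in $G$ are independent sets of size $t$ in the $(2d-2)$-regular graph $L(G)$, but the conjectured extremizer for fixed-size independent sets in $D$-regular graphs (Conjecture \ref{conj-indfixed}, itself open beyond $t\leq 4$) is a disjoint union of copies of $K_{D,D}$, which is not isomorphic to $L(K(n,d))$; so even granting that conjecture you would obtain an upper bound of $i_t$ of a union of $K_{2d-2,2d-2}$'s, which is weaker than $|{\mathcal M}_t(K(n,d))|$. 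In short, there is no proof here, and the statement remains open.
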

For $t=n$ this is Br\'egman's theorem (in the bipartite case) and the Kahn-Lov\'asz theorem (in the general case). For $t=0, 1$ and $2$ it is trivial in both cases. Friedland, Krop and Markstr\"om \cite{FriedlandKropMarkstrom} have verified the conjecture (in the bipartite case) for $t=3$ and $4$. For $t=\alpha n$ for $\alpha \in [0,1]$, asymptotic evidence in favor of the conjecture was provided first by Carroll, Galvin and Tetali \cite{CarrollGalvinTetali} and then (in a stronger form) by Kahn and Ilinca \cite{KahnIlinca}. We now briefly discuss this latter result.

Set $t=\alpha n$, where $\alpha \in (0,1)$ is fixed, and we restrict our attention to those $n$ for which $\alpha n$ is an integer. The first non-trivial task in this range is to determine the asymptotic behavior of $|{\mathcal M}_{\alpha n}(K(n,d))|$ in $n$ and $d$. To do this we start from the identity
$$
|{\mathcal M}_{\alpha n}(K(n,d))| = \sum_{a_1, \ldots a_{n/d}:\atop{0 \leq a_i \leq d,~
\sum_i a_i = \alpha n}} \prod_{i=1}^{n/d} {d \choose a_i}^2 a_i!
$$
Here the $a_i$'s are the sizes of the intersections of the matching
with each of the components of $K(n,d)$, and the term ${d \choose
a_i}^2 a_i!$ counts the number of matchings of size $a_i$ in a
single copy of $K_{d,d}$. (The binomial term represents the choice
of $a_i$ endvertices for the matching from  each color class,
and the factorial term tells us how many ways there are to pair the
endvertices from each class to form a matching.) Considering only those sequences $(a_1, \ldots a_{n/d})$ in which each $a_i$ is either $\lfloor \alpha d \rfloor$ or $\lceil \alpha d \rceil$, we get
\begin{equation} \label{inq-match_lb}
\log |{\mathcal M}_{\alpha n}(K(n,d))| = n\left(\alpha\log d+2H(\alpha)+
\alpha\log\left(\frac{\alpha}{e}\right)+\Omega_\alpha\left(\frac{\log d}{d}\right)\right),
\end{equation}
where $H(\alpha)-\alpha\log\alpha-(1-\alpha)\log(1-\alpha)$ is the binary entropy function.
The detailed analysis appears in \cite{CarrollGalvinTetali}. Using a refinement of Radhakrishnan's approach to Br\'egman's theorem, Kahn and Ilinca \cite{KahnIlinca}
give an upper bound on $\log |{\mathcal M}_{\alpha n}(G)|$ for arbitrary $d$-regular $G$ on $2n$ vertices that agrees with (\ref{inq-match_lb}) in the first two terms:
$$
\log |{\mathcal M}_{\alpha n}(G)| \leq n\left(\alpha\log d+2H(\alpha)+
\alpha\log\left(\frac{\alpha}{e}\right)+o(d^{-1/4})\right).
$$

\subsection{Counting homomorphisms} \label{subsec-homs problems}

Wilf \cite{Wilf} and Linial \cite{Linial} asked which graph on $n$ vertices and $m$ edges maximizes the number of proper $q$-colorings, for each $q$, and similar questions can be asked for other instances of homomorphisms and family of graphs. We will not present a survey of the many questions that have been asked in this vein (and in only a few cases answered); the interested reader might consult Cutler's article \cite{Cutler-survey}.

One question we will highlight comes directly from the discussion in Section \ref{sec-counting colorings}.
\begin{question} \label{question-homs}
Fix $n$, $d$ and $H$. Which $n$-vertex, $d$-regular graph $G$ maximizes ${\rm hom}(G,H)$, and what is the maximum value?
\end{question}
This question has been fully answered for only very few triples $(n,d,H)$. For example, Zhao \cite{Zhao} resolved the question for all $n$ and $d$ in the case where $H$ is the graph encoding independent sets as graph homomorphisms (as discussed in Section \ref{subsec-homsintro}), and he generalized his approach in \cite{Zhao2} to find infinitely many $H$ such that for every $n$ and every $d$,
\begin{equation} \label{eq-homs-bound}
{\rm hom}(G,H) \leq {\rm hom}(K_{d,d},H)^\frac{n}{2d}.
\end{equation}
Galvin and Tetali \cite{GalvinTetali-weighted}, having established (\ref{eq-homs-bound}) for {\em every} $H$ when $G$ is bipartite, conjectured that (\ref{eq-homs-bound}) should still hold for every $H$ when the biparticity assumption on $G$ is dropped, but this conjecture turned out to be false, as did the modified conjecture (from \cite{Galvin2}) that for every $H$ and $n$-vertex, $d$-regular $G$, we have
$$
{\rm hom}(G,H) \leq \max \left\{ {\rm hom}(K_{d,d},H)^\frac{n}{2d}, {\rm hom}(K_{d+1},H)^\frac{n}{d+1}\right\}.
$$
(Sernau \cite{Sernau} has very recently found counterexamples).

While we have no conjecture as to the answer to Question \ref{question-homs} in general, we mention a few specific cases where we do venture guesses.
\begin{conj} \label{conjecture-Kq}
For every $n, d$ and $q$, if $G$ is an $n$-vertex, $d$-regular graph then
$$
{\rm hom}(G,K_q) \leq {\rm hom}(K_{d,d},K_q)^\frac{n}{2d}
$$
(or, in the language of Section \ref{sec-counting colorings}, $c_q(G) \leq c_q(K_{d,d})^{n/2d}$).
\end{conj}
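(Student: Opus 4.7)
My plan is to attack Conjecture \ref{conjecture-Kq} by reducing the general case to the bipartite case already handled by Theorem \ref{thm-GTcols}, via the bipartite double cover. Given an $n$-vertex $d$-regular graph $G$, let $G' = G \times K_2$ be the graph on vertex set $V(G) \times \{0,1\}$ with $(u,0)(v,1) \in E(G')$ iff $uv \in E(G)$. Then $G'$ is $d$-regular, bipartite, and has $2n$ vertices, so Theorem \ref{thm-GTcols} gives
$$
c_q(G') \leq c_q(K_{d,d})^{\frac{2n}{2d}} = c_q(K_{d,d})^\frac{n}{d}.
$$
If I can establish the ``correlation inequality''
\begin{equation} \label{eq-corrineq-plan}
c_q(G)^2 \leq c_q(G')
\end{equation}
then taking square roots yields Conjecture \ref{conjecture-Kq}. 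This parallels Zhao's approach for independent sets, where the inequality $i(G)^2 \leq i(G \times K_2)$ allowed him to upgrade Kahn's bipartite bound to arbitrary regular graphs.

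To prove \eqref{eq-corrineq-plan}, I would attempt to construct an injection $\Phi$ from ordered pairs $(\phi_1,\phi_2)$ of proper $q$-colorings of $G$ into proper $q$-colorings of $G'$. The naive map $(v,0) \mapsto \phi_1(v)$, $(v,1) \mapsto \phi_2(v)$ fails, since a proper coloring of $G'$ requires $\phi_1(u) \neq \phi_2(v)$ (and vice versa) on every edge $uv$ of $G$, not merely $\phi_i(u) \neq \phi_i(v)$. The plan is to apply a ``bipartite swap'': build an auxiliary graph $\mathcal{D}(\phi_1,\phi_2)$ on $V(G)$ whose edges are the pairs $uv \in E(G)$ on which the pair $(\phi_1,\phi_2)$ is inconsistent, and then in each connected component of $\mathcal{D}$ perform a permutation on the colors appearing in $\phi_2$ (chosen canonically, e.g. the lex-least permutation of $[q]$ that resolves all conflicts in that component), symmetrically to Zhao's toggling of independent sets between components. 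Injectivity would come from recovering the component structure and the canonical permutation from the image.

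The main obstacle is precisely this swap step: unlike the independent-set case, where swapping is a single $\mathbb{Z}_2$ action on a component, here one needs an action of $S_q$ (or a well-chosen coset thereof) to fix inconsistencies, and one must verify that after applying independent actions to different components no \emph{new} conflicts are created across edges leaving a component. In view of Sernau's counterexamples to the general ${\rm hom}(G,H)^2 \leq {\rm hom}(G\times K_2,H)$ inequality, any such argument must genuinely exploit the full symmetry of $K_q$ (the transitive action of $S_q$ on colors) and not merely the absence of loops.

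If the reduction to the bipartite case cannot be pushed through, a backup plan is to attempt a direct entropy proof using Lemma \ref{lemma-conditional Shearer 2} with a \emph{random} order on $V(G)$, following the spirit of Radhakrishnan's proof of Br\'egman's theorem (Section \ref{subsec-rads proof}). Concretely: choose a uniformly random linear order $\tau$ on $V(G)$, and for each vertex $v$ let $P_\tau(v)$ be the neighbors of $v$ preceding it in $\tau$; apply the conditional Shearer bound with the family $\mathcal{F} = \{P_\tau(v)\} \cup \{ \{v\}\text{ (with multiplicity $p_\tau(v)$)}\}$, then average over $\tau$. Each vertex has $p_\tau(v)$ uniformly distributed in $\{0,1,\ldots,d\}$, and the hope is that after averaging, the combined $H(X_{P_\tau(v)}) + p_\tau(v)H(X_v | X_{P_\tau(v)})$ term (bounded as in Section \ref{subsec-GT} by a local $K_{p_\tau(v),p_\tau(v)}$-coloring count) reproduces the quantity $\tfrac{1}{d}\log c_q(K_{d,d})$ on average --- a clean ``averaging miracle'' analogous to the one Radhakrishnan uses for $\log d_k!/d_k$.
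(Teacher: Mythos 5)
This statement is a conjecture, not a theorem: the paper gives no proof and explicitly records that it is open for all $q \geq 3$, so there is nothing to compare your argument against except the partial results cited there. What you have written is a research plan rather than a proof, and both routes contain gaps that are not merely technical. For the double-cover route, everything rests on the correlation inequality $c_q(G)^2 \leq c_q(G \times K_2)$; given Theorem \ref{thm-GTcols}, this inequality is precisely the open content of Conjecture \ref{conjecture-Kq}, and it is the subject of Zhao's bipartite swapping trick paper \cite{Zhao2}, which establishes it only for special targets $H$ --- the paper notes that the methods of \cite{Galvin2} and \cite{Zhao2} reach $K_q$ only for $q > 2\binom{nd/2}{4}$. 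Your sketch of the swap does not close this gap. For independent sets the fix on a component of the disagreement graph is a single involution (exchange the two sets there); for colorings you must choose, for each component, a permutation of $[q]$ satisfying a system of disequalities coming from all conflict edges inside the component \emph{and} from all edges leaving it (so that the modified $\phi_1, \phi_2$ remain proper colorings of $G$). Such a permutation need not exist, the choices made in different components interact across the edges joining them, and even where choices exist a ``lex-least'' selection is not obviously recoverable from the image. The injection is asserted, not constructed.

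The backup plan can be ruled out concretely. Write $f(p) = \log c_q(K_{p,p})$, with $f(0)=0$. For $q=3$ one checks from $c_3(K_{p,p}) = 6(2^p-1)$ that $f$ is concave on $\{0,1,\dots,d\}$, whence $f(p) \geq \tfrac{p}{d} f(d)$, with equality only at $p \in \{0,d\}$. Since $\sum_v p_\tau(v) = |E(G)| = nd/2$ for \emph{every} order $\tau$, the bound of Theorem \ref{thm-non-bip-homs} satisfies $\tfrac{1}{d}\sum_v f(p_\tau(v)) \geq \tfrac{n}{2d} f(d)$ for every $\tau$, with equality only if each $p_\tau(v)$ is $0$ or $d$, which forces $G$ to be bipartite. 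So for non-bipartite $G$ every order gives a bound strictly worse than $c_q(K_{d,d})^{n/2d}$, and averaging over $\tau$ cannot help: an average of quantities each exceeding the target still exceeds it. This is exactly the constant-factor loss in the second term of the exponent that the paper already records in (\ref{eq-non-bip-col-count}). The averaging miracle in Radhakrishnan's proof occurs because the averaged quantity $\log N_k(\tau,f)$ depends jointly on $\tau$ and the random matching and its average beats its worst case; here the local bound $\log c_q(K_{p_\tau(v),p_\tau(v)})$ has already discarded all dependence on the coloring, and what remains is determined (to leading order) by $\sum_v p_\tau(v)$, which is order-independent. To salvage this route you would need a genuinely sharper conditional local estimate than $\log c_q(K_{p,p})$, and none is proposed.
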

This is open for all $q \geq 3$. Galvin \cite{Galvin2} and Zhao \cite{Zhao2} have shown that it is true when $q=q(n,d)$ is sufficiently large (the best known bound is currently $q > 2\binom{nd/2}{4}$), but neither proof method seems to say anything about constant $q$; see \cite{Galvin-asymptot-col-count} for the best approximate results to date for constant $q$.

Our next conjecture seems ripe for an entropy attack. It concerns the graph $H_{\rm WR}$, the complete fully looped graph on three vertices with a single edge (not a loop) removed (equivalently, $H_{\rm WR}$ is the complete looped path on 3 vertices). Homomorphisms to $H_{\rm WR}$ encode configurations in the {\em Widom-Rowlinson} model from statistical physics \cite{RowlinsonWidom}.
\begin{conj} \label{conjecture-WR}
For every $n$ and $d$, if $G$ is an $n$-vertex, $d$-regular graph then
\begin{equation} \label{eq-WR}
{\rm hom}(G,H_{\rm WR}) \leq {\rm hom}(K_{d+1},H_{\rm WR})^\frac{n}{d+1}.
\end{equation}
\end{conj}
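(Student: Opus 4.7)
The natural plan is to adapt the proof of Theorem~\ref{thm-GTcols}, swapping the bipartite pair $({\mathcal E},{\mathcal O})$ for the closed-neighborhood structure of $G$, since on the conjectured extremal graph (a disjoint union of $K_{d+1}$'s) every closed neighborhood is itself a copy of $K_{d+1}$. Let $X=(X_v)_{v\in V(G)}$ be a uniformly chosen homomorphism from $G$ to $H_{\rm WR}$; by Maximality of the uniform the goal becomes $H(X)\le \tfrac{n}{d+1}\log(2^{d+2}-1)$. Apply Shearer's lemma (Lemma~\ref{lemma-Shearer}) with the covering ${\mathcal F}=\{N[v]:v\in V(G)\}$: each vertex $u$ lies in $N[u]$ and in $N[w]$ for each of its $d$ neighbors $w$, so $t=d+1$ and
\[
H(X)\le \frac{1}{d+1}\sum_{v\in V(G)}H(X_{N[v]}).
\]
It would then suffice to prove the local inequality $H(X_{N[v]})\le \log(2^{d+2}-1)$ for every $v$, with equality attained precisely when $G[N[v]]$ is a clique.

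The main obstacle is that for generic $d$-regular $G$ the induced subgraph $G[N[v]]$ is a strict subgraph of $K_{d+1}$, so $X_{N[v]}$ can take values that would be illegal on the clique --- for instance, when $G$ is triangle-free one may have $X_v=0$ with two distinct neighbors of $v$ taking values $+$ and $-$. A direct support count then gives $H(X_{N[v]})\le \log(2\cdot 2^d + 3^d)$, which strictly exceeds $\log(2^{d+2}-1)$ for every $d\ge 2$, so the pure Shearer bound is too weak. The bipartite double-cover trick that handles the independent-set case is also unavailable: already for $G=K_3$ one computes ${\rm hom}(G,H_{\rm WR})^{2}=225>199={\rm hom}(G\times K_2,H_{\rm WR})$, so the Zhao-style swap inequality fails for $H=H_{\rm WR}$.

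To close the gap I would chain-rule through the centre of each closed neighborhood, writing $H(X_{N[v]})=H(X_v)+H(X_{N(v)}\mid X_v)$ and splitting on the value of $X_v$. Conditioned on $X_v\ne 0$ the coloring of $N(v)$ is confined to $\{0,X_v\}^d$ and contributes at most $d\log 2$, leaving $H(X_{N(v)}\mid X_v=0)$ as the only troublesome term. Here one should exploit that $X_{N(v)}$ is a marginal of the global uniform distribution, not a freely chosen WR-coloring of $G[N(v)]$: a Jensen-style convexity step analogous to \eqref{eq-homs5}--\eqref{eq-homs6}, combined with the $+/-$ involution of $H_{\rm WR}$ to pair colorings that differ by a sign flip, should reduce the effective range in the $X_v=0$ branch toward the $2^{d+1}-1$ valid on $K_d$. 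If the local identity will not balance vertex-by-vertex, the fallback is to replace Shearer's lemma by Kahn's conditional form (Lemma~\ref{lemma-conditional Shearer 2}) under a partial order chosen to amortise the excess from sparse neighborhoods against savings from dense ones --- this is the strategy that yielded Theorem~\ref{thm-non-bip-homs}. Pinning down the correct local identity is the crux; I would first test candidates computationally for $d=2$ on small $2$-regular graphs (cycles and disjoint unions of triangles) before attempting the general proof.
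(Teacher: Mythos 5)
What you have written is a research plan, not a proof, and the statement itself is an open conjecture: the paper offers no proof of \eqref{eq-WR}, only the remark that it ``seems ripe for an entropy attack'' and a weak variant (valid only for an enormous number of colors) from \cite{Galvin2}. Your preliminary observations are correct and genuinely useful --- ${\rm hom}(K_{d+1},H_{\rm WR})=2^{d+2}-1$, the closed-neighborhood Shearer bound $H(X)\le\frac{1}{d+1}\sum_v H(X_{N[v]})$ with $t=d+1$, the computation showing that for triangle-free $G$ the naive local bound $\log(2\cdot 2^d+3^d)$ exceeds the target $\log(2^{d+2}-1)$, and the verification that ${\rm hom}(K_3,H_{\rm WR})^2=225>199={\rm hom}(C_6,H_{\rm WR})$ so that Zhao's bipartite-swapping reduction is unavailable. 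But these observations establish that the two known routes fail; they do not supply a third.

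The gap is the entire content of the problem: the local inequality $H(X_{N[v]})\le\log(2^{d+2}-1)$ is simply false for the marginals one must control (your own support count shows the range of $X_{N[v]}$ is too large when $G[N[v]]$ is sparse), and the proposed repairs are aspirational. The claim that a Jensen step plus the $+/-$ involution ``should reduce the effective range in the $X_v=0$ branch toward $2^{d+1}-1$'' is exactly the assertion that needs proof, and nothing in the Jensen argument of Section \ref{subsec-GT} uses global information about $X$ beyond the support of the conditional distributions, so it cannot distinguish the marginal of a uniform $H_{\rm WR}$-coloring of $G$ from an arbitrary distribution on local colorings of the star. Likewise, Kahn's conditional Shearer lemma as used in Theorem \ref{thm-non-bip-homs} is engineered to produce bounds of the form $\prod_v{\rm hom}(K_{p(v),p(v)},H)^{1/d}$, i.e., a $K_{d,d}$-type answer, whereas the conjectured extremizer here is $K_{d+1}$; no choice of partial order in that lemma is known to produce a clique-type bound, and you give no amortization scheme that would. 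Until you can prove a correct local inequality (or a global substitute for one), the argument does not close, and the conjecture remains open.
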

A weak result in the spirit of Conjecture \ref{conjecture-WR} appears in \cite{Galvin2}: for every $n$, $d$ and $q$ satisfying $q>2^{nd/2 + n/2 -1}$, if $H_{\rm WR}^q$ is the complete looped graph on $q$ vertices with a single edge (not a loop) removed, and if $G$ is an $n$-vertex, $d$-regular graph, then (\ref{eq-WR}) (with $H_{\rm WR}^q$ in place of $H_{\rm WR}$) holds.

\medskip

Our final conjecture concerns the number of independent sets of a fixed size in a regular graph, and is due to Kahn \cite{Kahn}. We write $i_t(G)$ for the number of independent sets of size $t$ in a graph $G$.
\begin{conj} \label{conj-indfixed}
For $d$-regular bipartite $G$ on $n$ vertices (or for arbitrary $d$-regular $G$ on $n$ vertices) with $2d|n$, and for all $0 \leq t \leq n$,
$$
i_t(G) \leq i_t(K(n,2d))
$$
(where recall $K(n,2d)$ is the disjoint union of $n/2d$ copies of $K_{d,d}$).
\end{conj}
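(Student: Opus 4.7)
The goal is to generalize the entropy proofs of Kahn and Galvin--Tetali that gave $i(G) \leq i(K(n,2d))$ by tracking the size $t$ of the independent set, aiming for a coefficientwise bound on the independence polynomial $P_G(\lambda) = \sum_t i_t(G)\lambda^t$. Note that the known weighted bound $P_G(\lambda) \leq P_{K_{d,d}}(\lambda)^{n/2d}$ for all $\lambda>0$ does \emph{not} imply the coefficientwise statement, so some genuinely new work is needed. Following Zhao, the bipartite case should imply the general case: for arbitrary $d$-regular $G$ one passes to the bipartite double cover $G \times K_2$ and uses an injection that splits an independent set of $G$ into a pair of independent sets of matching total size in $G \times K_2$, so I will concentrate on the bipartite case below.

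\textbf{The bipartite argument.} Let $G$ be $d$-regular bipartite with parts $\mathcal{E}, \mathcal{O}$, and let $X$ be a uniformly chosen independent set of size $t$, viewed as a $\{0,1\}^{V(G)}$-valued random vector $(X_\mathcal{E},X_\mathcal{O})$. Then $H(X) = \log i_t(G)$. Let $S = |X \cap \mathcal{E}|$, and write via the chain rule
$$H(X) = H(S) + H(X_\mathcal{E}\mid S) + H(X_\mathcal{O} \mid X_\mathcal{E}).$$
Now mimic the Galvin--Tetali argument: apply conditional subadditivity and dropping of conditioning to get $H(X_\mathcal{O}\mid X_\mathcal{E}) \leq \sum_{v \in \mathcal{O}} H(X_v \mid X_{N(v)})$, and apply Shearer's lemma with $\mathcal{F}=\{N(v):v\in \mathcal{O}\}$ to bound $H(X_\mathcal{E}\mid S) \leq \frac{1}{d}\sum_{v\in \mathcal{O}} H(X_{N(v)}\mid S)$. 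This reduces the problem to the local quantity at each $v \in \mathcal{O}$, summing
$$H(X_{N(v)}\mid S) + d\,H(X_v \mid X_{N(v)})$$
and adding back $H(S)$ (whose trivial bound $\log(n/2+1)$ is negligible in the coefficientwise comparison we are after, but see below).

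\textbf{The local comparison.} For each $v \in \mathcal{O}$, let $p(C,s)$ be the joint probability that $X_{N(v)} = C$ and $S = s$; the local contribution expands by definition of (conditional) entropy into a sum over $(C,s)$. Conditional maximality of the uniform bounds the inner $H(X_v\mid X_{N(v)}=C)$ by $\log e(C)$ with $e(C) \in \{1,2\}$ (as in the usual proof), so that after the same Jensen step as Galvin--Tetali the local sum collapses to a log of a sum weighted by $e(C)^d$. The new feature is that this sum now carries a size statistic: it should evaluate to a polynomial in an auxiliary variable whose coefficients encode $i_s(K_{d,d})$. The key desired inequality is
$$H(X_{N(v)}\mid S) + d\,H(X_v\mid X_{N(v)}) \leq \log\Big(\sum_s i_s(K_{d,d})\,\mu_v(s)\Big) \ \text{ for an appropriate measure } \mu_v,$$
and upon exponentiating and combining over $v$, together with accounting for $H(S)$, one wants to recognize the right-hand side as the logarithm of an expression dominated coefficientwise by $i_t(K(n,2d)) = \sum_{\mathbf{a}:\sum a_i = t}\prod_{i=1}^{n/2d} i_{a_i}(K_{d,d})$.

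\textbf{Main obstacle.} The serious difficulty is precisely that all earlier entropy proofs end in a \emph{scalar} inequality (via Jensen), which is adequate for bounding $P_G(\lambda)$ pointwise but loses coefficientwise information. To keep track of $t$ one must run the Jensen step inside a polynomial ring, using a version of Jensen valid for log-concave (or better, real-rooted) sequences --- and the independence polynomial of $K_{d,d}$ is indeed real-rooted, which gives grounds for optimism. I expect the proof to hinge on exploiting log-concavity of $(i_s(K_{d,d}))_s$ to upgrade the scalar Jensen bound to a coefficientwise one after a convolution over $v \in \mathcal{O}$. A possible fallback, should the convolution step fail to give a clean bound, is to replace the Galvin--Tetali framework with a Radhakrishnan-style random-ordering argument (as in Kahn--Ilinca on the Upper Matching conjecture), where the running size of the partially revealed independent set is tracked explicitly by a random permutation of $V(G)$; this seems likely at least to recover the conjecture asymptotically in $t = \alpha n$.
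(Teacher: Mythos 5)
This statement is Conjecture \ref{conj-indfixed}, an \emph{open problem}: the paper attributes it to Kahn and records that it is known (in the bipartite case) only for $t\leq 4$, by Alexander and Mink, with asymptotic evidence for $t=\alpha n$ from Carroll--Galvin--Tetali and Ilinca--Kahn. There is no proof in the paper to compare against, and your proposal does not supply one; it is a research plan whose decisive step is explicitly left open in your own ``Main obstacle'' paragraph.

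To name the gap concretely: after conditioning on $S=t$ (or $S=|X\cap\mathcal{E}|$), the entropy $H(X)=\log i_t(G)$ is still a single scalar, and the Shearer/Jensen machinery can only ever deliver a scalar upper bound of the form $\frac{1}{d}\sum_{v\in\mathcal{O}}\log\bigl(\sum_C e(C)^d\,w_v(C)\bigr)$ where the weights $w_v(C)$ come from the (unknown) marginal of $X_{N(v)}$. Without size restriction this sum is exactly $i(K_{d,d})$ and the argument closes; with the size restriction there is no mechanism forcing the sum to be dominated by the size-$t$ slice $\sum_{\mathbf{a}:\sum a_i=t}\prod_i i_{a_i}(K_{d,d})$ --- the Jensen step averages over all local trace sizes and the global constraint $\sum_v(\cdot)=t$ couples the vertices in a way subadditivity cannot see. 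Your proposed fix, ``Jensen inside a polynomial ring'' via log-concavity or real-rootedness of $(i_s(K_{d,d}))_s$, is not an existing lemma and you do not formulate one; this is precisely where every known entropy attack on the fixed-size versions (here and in the Upper Matching Conjecture, Conjecture \ref{conj-upmatch}) stalls, which is why Ilinca and Kahn obtain only asymptotic agreement in the first two terms of the exponent rather than the exact bound. The reduction from general to bipartite $G$ via the double cover is likewise unjustified at fixed size: Zhao's swapping trick compares $i(G)^2$ with $i(G\times K_2)$, and a fixed-size analogue would require controlling a convolution $\sum_{a+b=2t} i_a(\cdot)i_b(\cdot)$ against $i_t(K(n,2d))^2$, which is again a nontrivial log-concavity statement, not a formality. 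The plan is a reasonable account of where the difficulty lies, but it should be presented as such, not as a proof.
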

This is an independent set analog of the Upper Matching conjecture (Conjecture \ref{conj-upmatch}). It is only known (in the bipartite case) for $t \leq 4$, a recent result of Alexander and Mink \cite{AlexanderMink}.

\section{Bibliography of applications of entropy to combinatorics} \label{sec-other papers}

Here we give a brief bibliography of the use of entropy in combinatorial enumeration problems. It is by no means comprehensive, and the author welcomes any suggestions for additions. Entries are presented chronologically (and alphabetically within each year).

\begin{itemize}

\item Erd\H{o}s \& R\'enyi, On two problems of information theory (1963) \cite{ErdosRenyi}: The first combinatorial application of entropy, this paper gives a lower bound on the size of the smallest distinguishing family of a set.

\item Pippenger, An information-theoretic method in combinatorial theory (1977) \cite{Pippinger}: Gives a lower bound on the size of the smallest distinguishing family, and a lower bound on the sum of the sizes of complete bipartite graphs that cover a complete graph.

\item Chung, Frankl, Graham \& Shearer, Some intersection theorems for ordered sets and
     graphs (1986) \cite{ChungFranklGrahamShearer}: Introduces Shearer's lemma, and uses it to bound the sizes of some families of sets subject to intersection restrictions.

\item Radhakrishnan, An entropy proof of Br\'egman's theorem (1997) \cite{Radhakrishnan}: Gives a new proof of Br\'egman's theorem on the maximum permanent of a 0-1 matrix with fixed row sums.

\item Friedgut \& Kahn, On the number of copies of one hypergraph in another (1998) \cite{FriedgutKahn}: Gives near-matching upper and lower bounds on the maximum number of copies of one graph that can appear in another graph with a given number of edges (only the upper bound uses entropy).

\item Kahn \& Lawrenz, Generalized Rank Functions and an Entropy Argument (1999) \cite{KahnLawrenz}: Puts a logarithmically sharp upper bound on the number of rank functions of the Boolean lattice.

\item Pippenger, Entropy and enumeration of Boolean functions (1999) \cite{Pippinger2}: Gives a new proof of an upper bound on the number of antichains in the Boolean lattice.

\item Kahn, An Entropy Approach to the Hard-Core Model on Bipartite Graphs (2001) \cite{Kahn}: Studies the structure of a randomly chosen independent set drawn both from an arbitrary regular bipartite graph, and from the family of hypercubes and discrete even tori, and gives a tight upper bound on the number of independent sets admitted by a regular bipartite graph.

\item Kahn, Range of cube-indexed random walk (2001) \cite{Kahn2}: Answers a question of Benjamini, H\"aggstr\"om and Mossel on the typical range of a labeling of the vertices of a hypercube in which adjacent vertices receive labels differing by one.

\item Kahn, Entropy, independent sets and antichains: A new approach to Dedekind's problem (2001) \cite{Kahn-dedekind}: Gives a new proof of an upper bound on the number of antichains in the Boolean lattice (with entropy entering in in proving the base-case of an induction).

\item Radhakrishnan, Entropy and counting (2003) \cite{Radhakrishnan2}: A survey article.

\item Friedgut, Hypergraphs, Entropy and Inequalities (2004) \cite{Friedgut}: Shows how a generalization of Shearer's lemma has numerous familiar inequalities from analysis as special cases.

\item Galvin \& Tetali, On weighted graph homomorphisms (2004) \cite{GalvinTetali-weighted}: Gives a sharp upper bound on the number of homomorphisms from a regular bipartite graph to any fixed target graph.

\item Friedgut \& Kahn, On the Number of Hamiltonian Cycles in a Tournament (2005) \cite{FriedgutKahn2}: Obtains the to-date best upper bound on the maximum number of Hamilton cycles admitted by a tournament.

\item Johansson, Kahn \& Vu, Factors in random graphs (2008) \cite{JohanssonKahnVu}: Obtains the threshold probability for a random graph to have an $H$-factor, for each strictly balanced $H$. Entropy appears as part of a lower bound on the number of $H$-factors in $G(n,p)$.

\item Carroll, Galvin \& Tetali, Matchings and Independent Sets of a Fixed Size in Regular Graphs (2009) \cite{CarrollGalvinTetali}: Approximates the number of matchings and independent sets of a fixed size admitted by a regular bipartite graph.

\item Cuckler \& Kahn, Entropy bounds for perfect matchings and Hamiltonian cycles (2009) \cite{CucklerKahn}: Puts upper bounds on the number of perfect matchings and Hamilton cycles in a graph.

\item Cuckler \& Kahn, Hamiltonian cycles in Dirac graphs (2009) \cite{CucklerKahn2}: Puts a lower bound on the number of Hamilton cycles in an $n$-vertex graph with minimum degree at least $n/2$.

\item Madiman \& Tetali, Information Inequalities for Joint Distributions, with Interpretations and Applications (2010) \cite{MadimanTetali}: Develops generalizations of subadditivity, and gives applications to counting graph homomorphisms and zero-error codes.

\item Cutler \& Radcliffe, An entropy proof of the Kahn-Lov\'asz theorem (2011) \cite{CutlerRadcliffe}: Gives a new proof of the extension of Br\'egman's theorem to general (non-bipartite) graphs.

\item Kopparty \& Rossman, The homomorphism domination exponent (2011) \cite{KoppartyRossman}: Initiates the study of a quantity closely related to homomorphism counts, called the homomorphism domination exponent.

\item Balister \& Bollob\'as, Projections, entropy and sumsets (2012) \cite{BalisterBollobas}: Explores the connection between entropy inequalities and combinatorial number-theoretic subset-sum inequalities.

\item Engbers \& Galvin, $H$-coloring tori (2012) \cite{EngbersGalvin1}: Obtains a broad structural characterization of the space of homomorphisms from the family of hypercubes and discrete even tori to any graph $H$, and derives long-range influence consequences.

\item Engbers \& Galvin, $H$-colouring bipartite graphs (2012) \cite{EngbersGalvin2}: Studies the structure of a randomly chosen homomorphism from an arbitrary regular bipartite graph to an arbitrary graph.

\item Madiman, Marcus \& Tetali, Entropy and set cardinality inequalities for partition-determined functions and application to sumsets (2012) \cite{MadimanMarcusTetali}: Explores the connection between entropy inequalities and combinatorial number-theoretic subset-sum inequalities.

\item Ilinca \& Kahn, Asymptotics of the upper matching conjecture (2013) \cite{KahnIlinca}: Gives the to-date best upper bounds on the number of matchings of a fixed size admitted by a regular bipartite graph.

\item Ilinca \& Kahn, Counting Maximal Antichains and Independent Sets (2013) \cite{KahnIlinca2}: Puts upper bounds on the number of maximal antichains in the $n$-dimensional
    Boolean algebra and on the numbers of maximal independent sets in the covering
    graph of the $n$-dimensional hypercube.

\item Balogh, Csaba, Martin \& Pluhar, On the path separation number of graphs (preprint) \cite{BaloghCsabaMartinPluhar}: Gives a lower bound on the path separation number of a graph.

\end{itemize}

\end{document}